\title{On the approaching time towards the attractor of differential equations 
perturbed by small noise}
\author{
	Isabell Vorkastner\thanks{
	Institut f\" ur Mathematik, MA 7-5, Fakult\" at II, Technische Universit\" at Berlin, 
	Stra\ss e des 17. Juni 136, \newline 10623 Berlin, Germany; 
	e-mail: \texttt{vorkastn@math.tu-berlin.de}  } 
	 }
\date{\today}
\begin{document}

\maketitle

\theoremstyle{plain}
\newtheorem{theorem}{Theorem}[section]
\newtheorem{lemma}[theorem]{Lemma}
\newtheorem{proposition}[theorem]{Proposition}
\newtheorem{corollary}[theorem]{Corollary}
\theoremstyle{definition}
\newtheorem{definition}[theorem]{Definition}
\newtheorem{example}[theorem]{Example}
\newtheorem{remark}[theorem]{Remark}

\begin{abstract}
	\noindent
	We estimate the time a point or set, respectively, requires to approach the attractor
	of a radially symmetric gradient type stochastic differential equation driven by small
	noise.
	Here, both of these times tend to infinity as the noise gets small. 
	However, the rates at which they go to infinity differ significantly.
	In the case of a set approaching the attractor, 
	we use large deviation techniques to show that this time increases exponentially.
	In the case of a point approaching the attractor, we apply a time change and compare 
	the accelerated process to another process and obtain that this time increases merely linearly.
	\\ \\
	\noindent
	\textit{Keywords.} random attractor, stochastic differential equation, synchronization, large deviation principle \\
	\noindent
	\textit{2010 Mathematics Subject Classification.} 37G35, 37H99, 60F10, 60H10
\end{abstract}

\section{Introduction}

Noise induced stabilization is an interesting phenomena to occur.
Here, the long-time dynamics in the absence of noise are not asymptotically stable while
the addition of noise stabilizes the dynamics.
One way to quantify the asymptotic behavior is to analyze (random) attractors.
The most common kinds of (random) attractors are (random) point and set attractors.
While (random) point attractors need to attract any single point, 
(random) set attractors even need to attract compact sets uniformly.
If an (random) attractor is a single (random) point, the long-time dynamics are asymptotically globally stable.
\\
In the case of noise induced stabilization, we can anticipate that the time required for a point or set, respectively, to approach the
attractor goes to infinity as the noise, which stabilizes the system, gets small.
Our aim is to estimates these times and provide the rates at which they tend to infinity.
\\
We consider a radially symmetric gradient type stochastic differential equation, i.e. 
\begin{align}
	\label{original_SDE}
	d X_t^\varepsilon = - \nabla U ( X_t^\varepsilon )  dt 
	+ \sqrt{\varepsilon} dW_t  \quad \textrm{on } \mathbb{R}^d
\end{align}
where $d \geq 2$, $\varepsilon>0$, $W_t$ is a $d$-dimensional Brownian motion and $U(x) = u(|x|^2)$ for all $x \in \mathbb{R}^d$ 
and some twice differentiable convex function $u: [0, \infty) \rightarrow \mathbb{R}$ 
attaining its unique minimum in $(0,\infty)$. \\
In the absence of noise, the solution of the differential equation
\begin{align}
	\label{ODE}
	d x_t = - \nabla U ( x_t )  dt  \qquad \textrm{on } \mathbb{R}^d.
\end{align}
has a stable sphere, 
meaning that any point on this sphere is a fixed point and any point except $0$ converges towards the 
sphere under the dynamics of \eqref{ODE}. The point $0$ is also a fixed point.
In terms of attractors this means that the point attractor is the union
of $0$ and the stable sphere while the set attractor is the closed ball
of the same radius as the stable sphere centered at $0$.\\
An interesting phenomena occurs if one adds noise as in \eqref{original_SDE}. In \cite{Gess2016}
it was shown that under some general conditions on $U$, the attractor of \eqref{original_SDE} collapses
to a single random point. 
Therefore, the addition of noise stabilizes the system.
This phenomenon is called synchronization by noise.  \\
The question that arises is how fast this phenomena occurs for small noise.
By the above observations it is obvious that the time until a point or a set, respectively, approaches the 
attractor should go to infinity as the noise gets small. 
We estimate the rates at which these times tend to infinity and even show a significant difference between
the time a point or a set, respectively, requires to approach the attractor. 
This difference is due to the fact that a point can approach the attractor 
moving to the stable sphere and then 
along the sphere while a set can just approach the attractor 
if a point of the stable sphere moves close to zero.
\\
In section \ref{section_strong_syn}, we show that the time until a set approaches the attractor increases 
exponentially in $\varepsilon^{-1}$ using large deviation techniques similar to \cite[Section 5.7]{LDP1998}. 
We obtain a lower bound by taking the difference of the potential describing the costs of a point on the stable sphere to approach zero. 
Assuming that the differential equation \eqref{ODE} pushes all mass into a bounded set in finite time,
we get a upper bound for the time. This estimate in particular demonstrates the sharpness of our lower bound.\\
In section \ref{section_weak_syn}, we prove that the time until a point approaches the attractor is of 
order $\varepsilon^{-1}$ for dimension $d=2$. 
Here, we accelerate the process and compare the accelerated process to a process on the sphere
that is known to synchronize weakly.

\section{Preliminaries}

We consider the stochastic differential equation (SDE) \eqref{original_SDE} and assume that $-\nabla U$ satisfies a one-sided-Lipschitz condition, i.e.
there exists some $C>0$ such that
\begin{align*}
	\langle x- y , - \nabla U(x) + \nabla U(y) \rangle \leq C |x-y|^2
\end{align*}
for all $x,y \in \mathbb{R}^d$. Then, the SDE \eqref{original_SDE} has a unique solution. 
We denote by $X^\varepsilon :  [0, \infty) \times \Omega \times \mathbb{R}^d \rightarrow \mathbb{R}^d$ 
the solution of \eqref{original_SDE}. \\
We say that the SDE \eqref{original_SDE} \emph{is strongly contracting} 
if there exist $r,t >0$ such that
$|x_t(y)|\leq r$ for all $y \in \mathbb{R}^d$ where $x_t(y)$ is the solution of 
the deterministic differential equation started in $y$. 
Therefore, the SDE \eqref{original_SDE} is strongly contracting if and only if 
$\int_R^\infty | \nabla U(x) |^{-1} dx$ exists for some $R>0$. \\
Let $R^\ast \in (0, \infty)$ be the point where $u$ attains its minimum, i.e.
$ u(R^\ast) < u(x)$ for any $x \not= R^\ast$. 
We restrict the proofs in the following sections to the case $R^\ast = 1$. However, all results are
extendable to general $R^\ast \in (0, \infty)$ since $X_t^{\varepsilon} / R^\ast$ is of the postulated form.
\\
Attractors and synchronization are defined for random dynamical systems.
We restrict our definitions to a random dynamical system on $\mathbb{R}^d$, 
see \cite{Arnold1988} for a more general setting.
\begin{definition} [Metric Dynamical System]
	Let $(\Omega, \mathcal{F}, \mathbb{P})$ be a probability space 
	and $\theta = (\theta_t )_{t \in \mathbb{R}}$ be a group of 
	maps $\theta_t : \Omega \rightarrow \Omega$ satisfying
	\begin{enumerate}
	\item [(i)] $(\omega,t) \mapsto \theta_t (\omega)$ is 
		$(\mathcal{F} \otimes \mathcal{B}(\mathbb{R}), \mathcal{F})$-measurable,
	\item [(ii)] $\theta_0 (\omega) = \omega$ for all  $\omega \in \Omega$,
	\item [(iii)] $\theta_{s+t} = \theta_s \circ \theta_t$ for all $s,t \in \mathbb{R}$,
	\item [(iv)] $\theta_t$ has ergodic invariant measure $\mathbb{P}$.
	\end{enumerate}
	The collection $(\Omega, \mathcal{F}, \mathbb{P}, \theta)$ is then called 
	a \textit{metric dynamical system}.
\end{definition}

\begin{definition} [Random Dynamical System]
	Let $(\Omega, \mathcal{F}, \mathbb{P}, \theta)$ be a metric dynamical system. 
	Further, let $\varphi : \mathbb{R}_+ \times \Omega \times \mathbb{R}^d \rightarrow \mathbb{R}^d$ 
	be such that
	\begin{enumerate}
	\item [(i)] $\varphi$ is $(\mathcal{B}(\mathbb{R}_+) \otimes \mathcal{F} \otimes 
		\mathcal{B}(\mathbb{R}^d), \mathcal{B}(\mathbb{R}^d))$-measurable,
	\item [(ii)] $\varphi_0 (\omega, x) = x$ for all $x \in \mathbb{R}^d$, $\omega \in \Omega$,
	\item [(iii)] $\varphi_{t+s} (\omega,x)= \varphi_t ( \theta_s \omega , \varphi_s (\omega,x))$
		for all $x \in \mathbb{R}^d$, $t,s \geq 0$, $\omega \in \Omega$,
	\item [(iv)] $x \mapsto \varphi_s(\omega, x)$ is continuous for each $s \geq 0$ and $\omega \in \Omega$.
	\end{enumerate}
	The collection $(\Omega, \mathcal{F}, \mathbb{P}, \theta, \varphi )$ 
	is then called a \textit{random dynamical system} (RDS).
\end{definition}

\begin{definition}
	A family $\left\{ D (\omega) \right\}_{\omega \in \Omega}$ of non-empty subsets 
	of $\mathbb{R}^d$ is said to be
	\begin{enumerate}
		\item [(i)]  a \textit{random compact set} if it is $\mathbb{P}$-almost surely compact 
		and $\omega \mapsto \sup_{y \in D(\omega)} |x - y|$ 
		is $\mathcal{F}$-measurable for each $x \in \mathbb{R}^d$.
		\item [(ii)] \textit{$\varphi$-invariant} if for all $t \geq 0$
		\begin{align*}
			\varphi_t (\omega, D(\omega)) = D(\theta_t \omega)
		\end{align*}
		for almost all $\omega \in \Omega$.
	\end{enumerate}	 
\end{definition}

\begin{definition}[Attractor]
	Let $(\Omega, \mathcal{F}, \mathbb{P}, \theta, \varphi)$ be an RDS and 
	$A$ be $\varphi$-invariant random compact set $A$.
	\begin{enumerate}
	\item [(i)] $A$ is called a \textit{weak point attractor} if for every $x \in \mathbb{R}^d $
		\begin{align*}
			\lim_{t \rightarrow \infty} \inf_{a \in A(\omega)}
				\left| \varphi_t (\theta_{-t}\omega ,x) - a \right| =0 
				\qquad \textrm{in probability}.
		\end{align*}
	\item [(ii)] $A$ is called a \textit{weak attractor} if for every compact set $B \subset \mathbb{R}^d$ 
		\begin{align*}
			\lim_{t \rightarrow \infty} \sup_{x \in B} \inf_{a \in A(\omega)}
				\left| \varphi_t (\theta_{-t}\omega ,x) - a \right| =0 
				\qquad \textrm{in probability}.
		\end{align*}
	\end{enumerate}
\end{definition}

By \cite{Dimitroff2011}, 
the SDE \eqref{original_SDE} generates an RDS $(\Omega, \mathcal{F}, \mathbb{P}, \theta, X^\varepsilon)$
with respect to the canonical setup and this RDS has a weak attractor
. 
Note that every weak attractor is a weak point attractor. The converse is not true. \\
Here, the space $\Omega$ is $\mathcal{C}(\mathbb{R}, \mathbb{R}^d)$, $\mathcal{F}$ 
is the Borel $\sigma$-field,
$\mathbb{P}$ is the two-sided Wiener measure, $\mathcal{F}$ is the $\sigma$-algebra generated
by $W_u - W_v$ for $v \leq u$ , where $W_s: \Omega \rightarrow \mathbb{R}^d$ is defined
as $W_s(\omega) = \omega (s)$, and $\theta_t$ is the shift $(\theta_t \omega)(s) = \omega(s+t) - \omega(t)$.
Further, define $\mathcal{F}^+$ as the $\sigma$-algebra generated by $W_u - W_v$ for $0 \leq v \leq u$
and $\mathcal{F}^-$ as the $\sigma$-algebra generated by $W_u - W_v$ for $v \leq u \leq 0$.

\begin{definition}[Synchronization]
	\textit{Synchronization} occurs if there is a weak attractor $A(\omega)$ being 
	a singleton for $\mathbb{P}$-almost every $\omega \in \Omega$.
	\textit{Weak synchronization} is said to occur if there is a weak point attractor $A(\omega)$ being 
	a singleton for $\mathbb{P}$-almost every $\omega \in \Omega$.
\end{definition}

We do not require the RDS to synchronize (weakly) in order to get lower and upper bounds 
on the time required to approach the attractor. 
However, we differ between the smallest and largest distance to the attractor. 
Both quantities coincide if the RDS synchronize (weakly).
\\
The paper \cite{Gess2016} provides general conditions for the RDS associated 
to \eqref{original_SDE} to synchronize (weakly).
If the SDE \eqref{original_SDE} additionally satisfies $ u \in \mathcal{C}_{loc}^{3}$, 
$ \log ^+ |x| \exp ( -2 u(|x|^2) / \varepsilon ) \in L^1 (\mathbb{R}^d)$ and 
$|u''' (x)| \leq C (|x|^m +1)$ for some $m \in \mathbb{N}$, $C \geq 0$ and where $u'''$ is the third derivative of $u$, then the associated RDS synchronizes by \cite{Gess2016}.
The assumption $ \log ^+ |x| \exp ( -2 u(|x|^2) / \varepsilon ) \in L^1 (\mathbb{R}^d$
is in particular satisfied for a strongly contracting SDE \eqref{original_SDE}.
\\
Obviously, synchronization implies weak synchronization. It is left as an open problem in \cite{Gess2016}
whether any RDS associated to SDE \eqref{original_SDE} satisfying
$ \exp ( -2 u(|x|^2) / \varepsilon ) \in L^1 (\mathbb{R}^d)$ synchronize weakly.
\\
Denote by 
\begin{align*}
	B_r := \left\{ x \in \mathbb{R}^d : \left| x \right| <r \right\}
\end{align*}
the open ball of radius $r>0$ centered at $0$ and by
\begin{align*}
	S_r := \left\{ x \in \mathbb{R}^d : \left| x \right| =r \right\}
\end{align*}
the sphere of radius $r >0$ centered at $0$. For a set $M \subset \mathbbm{R}^d$ denote by $\bar{M}$ the
closure of the set $M$.

\section{Time required for a set to approach the attractor}
\label{section_strong_syn}

\subsection{Large deviation principle}
\label{section_LDP}
We use the large deviation principle (LDP) to describe the behavior of $X_t^\varepsilon$ for small 
$\varepsilon>0$. 
We aim to give an estimate on the time a set needs to approach the weak attractor. Observe that by 
\cite[Theorem 3.1]{Dimitroff2011}
there exists a weak attractor of the RDS associated to \eqref{original_SDE} 
and that the weak attractor is $\mathbb{P}$-almost surely unique by \cite[Lemma 1.3]{Gess2016}.
We denote by $A^{X,\varepsilon}$ the weak attractor.
\\
Let $\mu^\varepsilon_T$ be the probability measure induced by $\sqrt{\varepsilon} W_t$  on $C_0([0,T])$,
the space of all continuous functions $\phi:[0,T] \rightarrow \mathbb{R}^d$ such that $\phi(0)=0$
equipped with the supremum norm topology. By Schilder's theorem $\mu_T^\varepsilon$ satisfies an LDP with good rate function
\begin{align*}
	\hat{I}_T (g) = 
	\begin{cases}
		\frac{1}{2} \int_0^T | \dot{g} (t) |^2 dt, 
		& g \in \left\{ \int_0^t f(s) ds  : f \in L^2([0,T]) \right\}
		\\
		\infty, & \textrm{otherwise}
	\end{cases}.
\end{align*}
for $ g \in C_0 ([0,T])$.
The deterministic map $F_T: C_0 ([0,T]) \rightarrow C([0,T] \times \mathbb{R}^d , \mathbb{R}^d)$ is defined
by $f= F_T (g)$, where $ f$ is the semi-flow associated to
\begin{align}
	\label{semi-flow_with_g}
	f(t) = f(0) + \int_0^t - \nabla U(f(s)) ds +g(t), \quad t \in [0,T].
\end{align}
The LDP associated to the semi-flow $X_t^{\varepsilon}$ is therefore a direct application of the contraction 
principle with respect to the continuous map $F_T$. Therefore, $X_t^{\varepsilon}$ satisfies the LDP in 
$C([0,T] \times \mathbb{R}^d , \mathbb{R}^d)$ with good rate function
\begin{align*}
	I_T (\phi ) = \inf \left\{ \hat{I}_T(g): g \in C_0([0,T]) \textrm{ and } \phi = F_T(g) \right\}.
\end{align*}
for $\phi \in C([0,T] \times \mathbb{R}^d , \mathbb{R}^d )$. 
Define the stopping times
\begin{align*}
	\tau_{1,\delta}^\varepsilon &:= \inf \left\{ t \geq 0 : 
		\left| X_t^{\varepsilon}(x) -X_t^{\varepsilon}(y) \right| \leq \delta 
		\textrm{ for all } x,y \in S_1 \right\}, \\
	\tau_{2,\delta,M}^\varepsilon &:= \inf \left\{ t \geq 0 : 
		 \sup_{a \in A^{X, \varepsilon} (\theta_t \cdot) } | X_t^\varepsilon (x) - a|
		\leq \delta \textrm{ for all } x \in M \right\} \\
	\tau_{3,\delta}^\varepsilon &:= \inf \left\{ t \geq 0 : 
		\left| X_t^{\varepsilon}(x) -X_t^{\varepsilon}(y) \right| \leq \delta
		\textrm{ for all } x,y \in \mathbb{R}^d \right\}
\end{align*}
for $\delta>0$ and a set $M \subset \mathbb{R}^d$. 
Here, $\tau_{2,\delta,M}^\varepsilon$ describes the time the set $M$ needs to approach 
the attractor $A^{X,\varepsilon}$. 
Observe that 
$\tau_{1,2\delta}^\varepsilon \leq \tau_{2,\delta,M}^\varepsilon \leq \tau_{3,\delta}^\varepsilon$
for any $\delta>0$ and $S_1 \subset M \subset \mathbb{R}^d$.
\\
In the next subsection we use the LDP to show a lower bound 
for $\tau^\varepsilon_{1,\delta}$ and 
an upper bound for $\tau^\varepsilon_{3,\delta}$. 
We then conclude this section combining these estimates and showing that 
$\tau_{1,\delta}^\varepsilon$, $\tau_{2,\delta,M}^\varepsilon$ and $\tau_{3,\delta}^\varepsilon$ are 
roughly of order $\exp( V / \varepsilon )$ for some $V>0$.

\subsection{Lower bound for $\tau^\varepsilon_{1,\delta}$}
\label{section_lower_bound}

In this subsection we show a lower bound for $\tau^\varepsilon_{1,\delta}$. 
Using the gradient type form of the SDE \eqref{original_SDE}, we provide 
an upper bound for the probability that this stopping time is smaller than some deterministic time.
Afterwards, we use a similar approach as in \cite[Section 5.7]{LDP1998} to deduce that 
$\tau^\varepsilon_{1,\delta}$ is roughly greater than
$\exp(V/ \varepsilon)$ where $V>0$ is determined by the potential $U$.\\
Define the annulus
\begin{align*}
	D_{r,R} &:= \left\{ x \in \mathbb{R}^d : r < \left| x \right| < R  \right\}
\end{align*}
for $0 \leq r<R \leq \infty$. Moreover, denote by 
\begin{align*}
	\tau^\varepsilon (M,D) := \inf \left\{ t\geq 0: X_t^{\varepsilon}(x) \not\in D
	\textrm{ for some } x \in M \right\}
\end{align*}
the time until the semi-flow started in $M\subset \mathbb{R}^d$ leaves $D\subset \mathbb{R}^d$.
For $0 \leq r_1 < r_2 < r_3 \leq \infty$ with $r_1<1<r_3$ set
\begin{align*}
	V(r_1,r_2,r_3):= 
	2 \min \left\{ u(r_1^2) - u(\min \left\{ r_2^2,1 \right\}), 	u(r_3^2) - u(\max \left\{ r_2^2,1 \right\}) \right\} 
\end{align*}
where $u(\infty):= \lim_{x \rightarrow \infty} u(x)= \infty$. 
We show that $V$ represents the cost  of forcing the system \eqref{original_SDE} started on sphere
$S_{r_2}$ to leave the annulus $D_{r_1,r_3}$. 

\begin{lemma}
	\label{LDP_lower_bound}
	Let $0 \leq r_1 < r_2 < r_3 \leq\infty$ with $r_1 <1< r_3$ and let $T>0$. 
	Then,
	\begin{align*}
		\limsup_{\varepsilon \rightarrow 0} \varepsilon \log 
		\mathbb{P} \left( \tau^\varepsilon ( S_{r_2}, D_{r_1,r_3} ) \leq T \right) \leq -V(r_1,r_2,r_3)
	\end{align*}
\end{lemma}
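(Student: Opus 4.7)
The plan is to express the bad event as a closed subset of the driving noise path space and apply Schilder's theorem together with the contraction principle implicit in $F_T$. Abbreviate $D = D_{r_1,r_3}$ and note that $\{\tau^\varepsilon(S_{r_2},D) \leq T\} = \{\sqrt{\varepsilon}W \in F\}$ where
\begin{equation*}
F := \bigl\{g \in C_0([0,T]) : \exists\, x \in S_{r_2},\, t \in [0,T],\ F_T(g)(t,x) \notin D\bigr\}.
\end{equation*}
Since $F_T$ is continuous (as a consequence of the one-sided Lipschitz bound on $-\nabla U$ and Gronwall), and since the complement of $D$ is closed while $S_{r_2}$ and $[0,T]$ are compact, a standard subsequence argument would show that $F$ is closed. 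Schilder's theorem then gives
\begin{equation*}
\limsup_{\varepsilon \to 0} \varepsilon \log \mathbb{P}(\sqrt{\varepsilon}W \in F) \leq -\inf_{g \in F} \hat{I}_T(g),
\end{equation*}
and the whole lemma reduces to establishing $\inf_{g \in F} \hat{I}_T(g) \geq V(r_1,r_2,r_3)$.

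For this, fix $g \in F$ with $\hat{I}_T(g) < \infty$ and a witness $(x^\ast, t^\ast)$. The trajectory $\psi(t) := F_T(g)(t,x^\ast)$ is absolutely continuous on $[0,t^\ast]$ with $\dot{\psi} = -\nabla U(\psi) + \dot{g}$ (a.e.), so
\begin{equation*}
\hat{I}_T(g) = \tfrac{1}{2}\int_0^T |\dot{g}|^2\,dt \geq \tfrac{1}{2}\int_0^{t^\ast} |\dot{\psi} + \nabla U(\psi)|^2\,dt.
\end{equation*}
The standard gradient-system trick uses the pointwise identity $|\dot{\psi}+\nabla U(\psi)|^2 = |\dot{\psi}-\nabla U(\psi)|^2 + 4\frac{d}{dt}U(\psi)$, which upon integration over any $[a,b] \subset [0,t^\ast]$ yields the key bound
\begin{equation*}
\hat{I}_T(g) \geq 2\bigl(U(\psi(b)) - U(\psi(a))\bigr).
\end{equation*}

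The final step is choosing $(a,b)$ according to which boundary $\psi$ exits through. Suppose first $|\psi(t^\ast)| = r_1$ (inner exit). If $r_2 \leq 1$, I take $a=0,\ b=t^\ast$ to obtain $\hat{I}_T(g) \geq 2(u(r_1^2)-u(r_2^2))$. If $r_2 > 1$, continuity of $\psi$ and the intermediate value theorem force $\psi$ to cross $S_1$, and taking $a$ to be the last such crossing time before $t^\ast$ gives $\hat{I}_T(g) \geq 2(u(r_1^2) - u(1))$. Combining, $\hat{I}_T(g) \geq 2(u(r_1^2) - u(\min\{r_2^2,1\}))$. The symmetric argument for $|\psi(t^\ast)| = r_3$ yields $\hat{I}_T(g) \geq 2(u(r_3^2) - u(\max\{r_2^2,1\}))$. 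Since $\psi$ exits through one of these two boundaries, the minimum of these two quantities bounds $\hat{I}_T(g)$ from below, which is precisely $V(r_1,r_2,r_3)$.

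The main obstacle I anticipate is verifying the closedness of $F$ and, relatedly, accommodating the degenerate case $r_3 = \infty$, where $u(r_3^2) = \infty$ makes the outer-exit bound vacuous and one must verify that the inner-exit branch still supplies the desired bound. A secondary technical point is justifying the chain rule $\frac{d}{dt}U(\psi) = \langle \nabla U(\psi), \dot{\psi}\rangle$ almost everywhere for the merely absolutely continuous path $\psi$ (valid because $U \in C^2$ and $\psi \in H^1$), which legitimizes the integration by parts underlying the gradient-system lower bound.
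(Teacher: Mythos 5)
Your proposal is correct and takes essentially the same route as the paper: the paper applies the contraction principle to get an LDP for $X^\varepsilon$ and bounds the rate function on the closed set of flows exiting $D_{r_1,r_3}$ from $S_{r_2}$ via the identity $\tfrac12|\dot\psi+\nabla U(\psi)|^2=\tfrac12|\dot\psi-\nabla U(\psi)|^2+2\tfrac{d}{dt}U(\psi)$, then does the same case split on $r_2\lessgtr 1$ with an intermediate-value crossing of $S_1$; working directly in the noise space with Schilder is an equivalent reformulation. The technical points you flag (closedness, the $r_3=\infty$ case, the chain rule for $H^1$ paths) are likewise left implicit in the paper.
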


\begin{proof}
	For any $ \phi \in C([0,T] \times \mathbb{R}^d, \mathbb{R}^d)$, 
	$x \in \mathbb{R}^d$ and $0 \leq s < t \leq T$,
	\begin{align}
		\label{potential_lower_bound}
		\begin{split}
		I_T (\phi) 
			& \geq \frac{1}{2} \int_s^t \left| \dot{\phi} (u, x) + \nabla U (\phi(u,x)) \right|^2 du \\
			& = \frac{1}{2} \int_s^t \left| \dot{\phi} (u, x) - \nabla U (\phi(u,x)) \right|^2 du
				+ 2 \int_s^t \langle \dot{\phi}(u,x), \nabla U (\phi(u,x)) \rangle du \\
			& \geq 2  \left( U ( \phi(t,x)) - U(\phi(s,x)) \right).
		\end{split}
	\end{align}
	Define
	\begin{align*}
		\Phi_i := \left\{ \phi \in C ( [0,T] \times \mathbb{R}^d, \mathbb{R}^d) : 
			\phi(0, \cdot) = Id \textrm{ and } 
			 | \phi (t,x) | = r_i \textrm{ for some } x \in S_{r_2},t \in [0,T]  \right\}
	\end{align*}
	for $i=1,3$. By LDP it follows that
	\begin{align*}
		\limsup_{\varepsilon \rightarrow 0} \varepsilon \log 
		\mathbb{P} \left( \tau^\varepsilon ( S_{r_2}, D_{r_1,r_3} ) \leq T \right)
		\leq - \inf_{\phi \in \Phi_1 \cup \Phi_3} I_T (\phi).
	\end{align*}
	We consider the case $r_2 \leq 1$. If $\phi \in \Phi_1$, there exists $x \in S_{r_2}$ and $ t \in [0,T]$
	such that $|\phi(t,x)| = r_1$. By \eqref{potential_lower_bound}, 
	$I_T(\phi) \geq 2 (U(r_1) -U(r_2))$.
	If $\phi \in \Phi_3$, there exists $x\in S_{r_2}$ and $0\leq s <t\leq T$ such that $|\phi(s,x)|=1$
	and $|\phi(t,x)|= r_3$. Using \eqref{potential_lower_bound}, it follows that
	$I_T(\phi) \geq 2 (U(r_3) -U(1))$. Repeating the same arguments for the case $r_2 >1$, the
	statement follows.
\end{proof}

\noindent
Denote by
\begin{align*}
	\sigma^\varepsilon (M,D) := \inf \left\{ t \geq 0 : X_t^{\varepsilon}(x) \in D 
	\textrm{ for all } x \in M\right\}
\end{align*}
the time until $D \subset \mathbb{R}^d$ contains the semi-flow started in $M \subset \mathbb{R}^d$.\\
The next lemma estimates the time until the semi-flow started in an annulus is contained in
a neighborhood of the stable sphere for small noise.
Observe that this time is roughly the time the semi-flow of the ODE \eqref{ODE}
started in the annulus requires to be contained in the neighborhood since the semi-flow
of the SDE \eqref{original_SDE} behaves similar to the semi-flow of the ODE \eqref{ODE}
for small noise on a fixed time scale.

\begin{lemma}
	\label{bound_on_stopping_time}
	Let $0<r_1 < r_2 <\infty$ and $0 \leq r_3 <1 < r_4 \leq \infty$. Then
	\begin{align*}
		\lim_{t \rightarrow \infty} \limsup_{\varepsilon \rightarrow 0} \varepsilon  \log 
		\mathbb{P} \left( \sigma^\varepsilon ( \overline{ D_{r_1,r_2}},  D_{r_3,r_4}) > t \right) 
		\leq - V(0, r_1, \infty).
	\end{align*}
\end{lemma}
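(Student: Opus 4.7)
The plan is to decompose the event $\{\sigma^\varepsilon > t\}$ according to whether the semi-flow remains in a larger auxiliary annulus. Fix $r_1' \in (0, r_1)$, $r_2' \in (r_2, \infty)$, and intermediate radii $r_3 < r_3^* < 1 < r_4^* < r_4$. Then
\[
\mathbb{P}\bigl(\sigma^\varepsilon(\overline{D_{r_1, r_2}}, D_{r_3, r_4}) > t\bigr) \leq \mathbb{P}(A) + \mathbb{P}(B),
\]
where $A = \{\tau^\varepsilon(\overline{D_{r_1, r_2}}, D_{r_1', r_2'}) \leq t\}$ is the event that some trajectory has left the larger annulus, and $B$ is the event that all trajectories remain in $\overline{D_{r_1', r_2'}}$ but at least one has not yet entered $D_{r_3, r_4}$ by time $t$.

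For part (A), I would use the same LDP technique as in the proof of Lemma \ref{LDP_lower_bound}, but with initial set $\overline{D_{r_1, r_2}}$ instead of a single sphere. The continuity of $x \mapsto X^\varepsilon_\cdot(x)$ guaranteed by the one-sided Lipschitz condition renders the event closed in the flow topology, and the relevant infimum of the rate function reduces to $\inf_{r \in [r_1, r_2]} V(r_1', r, r_2')$. Using the convexity of $u$ and the explicit form of $V$, a direct case analysis shows that this infimum equals $V(r_1', r_1, r_2')$ and that it increases to $V(0, r_1, \infty)$ as $r_1' \to 0$ and $r_2' \to \infty$.

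For part (B), the gradient structure of $U$ together with the convexity of $u$ forces $|x_s(x)| \to 1$ monotonically, and uniformly on the compact set $\overline{D_{r_1', r_2'}}$, so one can find a finite $T_1 > 0$ with $x_{T_1}(x) \in D_{r_3^*, r_4^*}$ for every $x \in \overline{D_{r_1', r_2'}}$. Applying the LDP of Section \ref{section_LDP} on $[0, T_1]$ with compactness of the initial set then yields a constant $V_2 > 0$ with
\[
\limsup_{\varepsilon \to 0} \varepsilon \log \sup_{y \in \overline{D_{r_1', r_2'}}} \mathbb{P}\bigl(X^\varepsilon_{T_1}(y) \notin D_{r_3, r_4}\bigr) \leq -V_2,
\]
because any LDP minimizer must deviate from the deterministic trajectory $x_\cdot(y)$ by at least $\min(r_3^* - r_3, r_4 - r_4^*) > 0$ at time $T_1$. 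Iterating this one-step bound over $K = \lfloor t / T_1 \rfloor$ time blocks by the Markov property (applicable precisely because on $B$ the semi-flow remains in $\overline{D_{r_1', r_2'}}$), and controlling the uncountable supremum over initial conditions inside $\sigma^\varepsilon$ by a finite $\delta$-net via the one-sided Lipschitz bound, produces $\limsup_{\varepsilon \to 0} \varepsilon \log \mathbb{P}(B) \leq -K V_2$.

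Combining the two estimates gives
\[
\limsup_{\varepsilon \to 0} \varepsilon \log \mathbb{P}(\sigma^\varepsilon > t) \leq -\min\bigl( V(r_1', r_1, r_2'),\, K V_2 \bigr).
\]
Letting first $t$ (and hence $K$) tend to infinity removes the second term, after which letting $r_1' \to 0$ and $r_2' \to \infty$ produces the claimed bound $-V(0, r_1, \infty)$. The main technical obstacle is the Markov iteration in (B): since $\sigma^\varepsilon$ is an uncountable supremum over initial conditions in $\overline{D_{r_1, r_2}}$, passing from the pointwise one-step LDP estimate to a bound on $\mathbb{P}(B)$ requires combining the finite net with the uniform continuity of $x \mapsto X^\varepsilon_\cdot(x)$ on compact time intervals.
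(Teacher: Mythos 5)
Your decomposition into the exit event $A$ and the stay-but-not-entered event $B$, with $A$ handled by the radial rate-function bound of Lemma \ref{LDP_lower_bound} and the limit $r_1'\to 0$, $r_2'\to\infty$, is exactly the paper's first step (there the auxiliary annulus is $N=\overline{D_{\alpha,\beta}}$ with $\alpha<r_1<r_2<\beta$ chosen so that both $V(\alpha,r_1,\beta)$ and $V(\alpha,r_2,\beta)$ exceed $V-\delta/2$). One small imprecision in your part (A): $\inf_{r\in[r_1,r_2]}V(r_1',r,r_2')$ is not $V(r_1',r_1,r_2')$ in general but $\min\{V(r_1',r_1,r_2'),V(r_1',r_2,r_2')\}$ (the two branches of the $\min$ in $V$ are respectively increasing and decreasing in $r$); since the second term tends to $+\infty$ as $r_2'\to\infty$, the limiting bound $V(0,r_1,\infty)$ is still correct. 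Where you genuinely diverge from the paper is part (B). The paper applies the LDP upper bound once on the whole interval $[0,t]$ to the closed set $\Psi_t$ and then proves $\lim_{t\to\infty}\inf_{\Psi_t}I_t>V$ by contradiction: it splits a hypothetical low-cost path over $[0,nT]$ into $n$ blocks, extracts a block with vanishing cost, and uses Arzel\`a--Ascoli plus lower semicontinuity of $\hat I_T$ to produce a zero-cost limit that must be the deterministic flow, contradicting the choice of $T$. You instead establish a fixed-horizon one-step estimate with some $V_2>0$ (which still requires the goodness of the rate function to see that the infimum over the closed "miss the target at time $T_1$" event is positive) and iterate it over $\lfloor t/T_1\rfloor$ blocks via the Markov property, controlling the uncountable supremum over initial points by a finite net and the Lipschitz flow bound. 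Both routes are valid; yours yields the quantitatively stronger statement that the contribution of $B$ has rate $-KV_2\to-\infty$, at the price of the net/Markov bookkeeping, while the paper's argument keeps everything inside a single application of the LDP at the cost of the more delicate compactness-and-contradiction step.
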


\begin{proof}
	Set $V:=V(0, r_1, \infty)>0$ and let $0<\delta<V/2$.
	We choose $0< \alpha < r_1< r_2 < \beta$ such that 
	$V(\alpha, r_1, \beta ) \geq V - \delta /2$ and 
	$V(\alpha, r_2, \beta ) \geq V - \delta /2$.
	Set $M:= \overline{ D_{r_1,r_2}}$ and $N := \overline{ D_{\alpha,\beta}}$.
	It holds that
	\begin{align*}
		\mathbb{P} \left( \sigma^\varepsilon ( M,  D_{r_3,r_4}) > t \right) 
		\leq \mathbb{P} \left( \tau^\varepsilon (  M,  N) \leq t \right)
			+ \mathbb{P} \left( \tau^\varepsilon ( M,  N) > t
			\textrm{ and } \sigma^\varepsilon ( M,  D_{r_3,r_4}) > t \right)
	\end{align*}
	By Lemma \ref{LDP_lower_bound} there exists $\varepsilon_0>0$ such that
	\begin{align*}
		\mathbb{P} \left( \tau^\varepsilon (  M,  N) \leq t \right) 
		&\leq  \mathbb{P} \left( \tau^\varepsilon (  S_{r_1},  N) \leq t \right) 
			+ \mathbb{P} \left( \tau^\varepsilon (  S_{r_2},  N) \leq t \right) \\
		&\leq 2 \exp ( - (V - \delta) / \varepsilon )
	\end{align*}
	for all $\varepsilon \leq \varepsilon_0$.
	We consider the closed sets 
	\begin{align*}
		\Psi_t &:= \big\{ \phi \in C ( [0,t] \times \mathbb{R}^d, N ) : 
		\textrm{ for all } s \in [0,t] \textrm{ there } 
		\\ & \hspace{90pt} 
		 \textrm{exists an } x \in M \textrm{ such that } \phi(s,x) \not\in D_{r_3,r_4}
		 \big\}, \\
		\widetilde{\Psi}_t &:= \big\{ \phi \in C ( [0,t] \times \mathbb{R}^d , \mathbb{R}^d ) : 
		\textrm{ for all } s \in [0,t]
		\textrm{ there } 
		\\ & \hspace{90pt} 
		\textrm{exists an } x \in N \textrm{ such that } \phi(s,x) 
		\not\in D_{r_3,r_4} \big\}.
	\end{align*}
	The event $\left\{ \tau^\varepsilon ( M, N) > t \right\} \cap 
	\left\{ \sigma^\varepsilon ( M,  D_{r_3,r_4}) > t \right\}$ is contained in 
	$\left\{ X_t^\varepsilon \in \Psi_t \right\}$.
	By LDP 
	\begin{align*}
		\limsup_{\varepsilon \rightarrow 0} \varepsilon  \log 
		\mathbb{P} \left( \tau^\varepsilon ( M, N) > t \textrm{ and } 
		\sigma^\varepsilon ( M,  D_{r_3,r_4}) > t \right)	
		\leq - \inf_{\phi \in \Psi_t} I_t (\phi).
	\end{align*}
	It remains to show that
	\begin{align}
		\label{limit_infty}
		\lim_{t \rightarrow \infty} \inf_{\phi \in \Psi_t} I_t (\phi) > V. 
	\end{align}
	There exists an $T>0$ such that the semi-flow associated to the deterministic ODE \eqref{ODE}
	started in $N$ is in $D_{r_3 ,r_4 }$ at time $T$.
	Assume that \eqref{limit_infty} is false. Then, for  
	every $ n \in \mathbb{N}$ there exists $\phi_n  \in \Psi_{nT}$ such that 
	$I_{nT}(\phi_n) \leq V$. Hence, there exists an $ g_n \in C_0([0,nT])$ 
	with $F_{nT}(g_n)= \phi_n$ and $\hat{I}_{nT} (g_n) \leq 2V$.
	Set $g_{n,k} (t) := g_n(t+kT) - g_n(kT)$ for $0\leq k \leq n-1$ and $0\leq t\leq T$. 
	We define $\phi_{n,k}:= F_{T}( g_{n,k})$. 
	Observe that $\phi_{n,k} \in \widetilde{\Psi}_T$ since $\phi_n \in \Psi_{nT}$ and
	$\phi_n (kT +t,x) = \phi_{n,k} (t, \phi_n (kT,x))$ for all $x \in M$.
	By definition of $g_n$, it follows that
	\begin{align*}
		\sum_{k=0}^{n-1} \hat{I}_T (g_{n,k}) = \hat{I}_{nT} (g_n) \leq 2V.
	\end{align*}
	for all $n \in \mathbb{N}$.
	Hence there exists a sequence $h_n \in C_0([0,T])$ with 
	$\lim_{n \rightarrow \infty} \hat{I}_T(h_n) = 0$ and 
	$F_T(h_n) \in \tilde{\Psi}_T$ for all $n \in \mathbb{N}$.
	Arzel\`{a}-Ascoli implies that 
	$\left\{ h \in C_0([0,T]) :  \hat{I}_T (h) \leq 2V \right\}$
	is a compact subset of $ C_0([0,T])$. 
	Therefore, the sequence $h_n$ has a limit point $h$ in $C_0([0,T])$. 
	Continuity of $F_T$ implies that $\psi := F_T(h) \in \tilde{\Psi}_T$.
	By lower semi-continuity of $\hat{I}_T$, 
	$I_T(\psi) =0$ and $\psi$ describes the flow of the deterministic
	ODE \eqref{ODE}. 
	By definition of $T$, for all $x\in N$ it holds that $\psi(T,x) \in D_{r_3,r_4}$ which is a contradiction
	to $\psi \in \widetilde{\Psi}_T$.	
\end{proof}

\begin{proposition}
	\label{prop_lower_bound}
	Let $0 \leq  r_1 < r_2 < r_3 \leq \infty$ with $r_1 < 1 <r_3$. 
	Set $V:=V(r_1,1,r_3)$.
	For any $\beta>0$ it holds that
	\begin{align*}
		\lim_{\varepsilon \rightarrow 0}  \mathbb{P} 
		\left( \tau^\varepsilon (S_{r_2}, D_{r_1,r_3}) > \exp ( (V - \beta )/ \varepsilon ) \right) =1
	\end{align*}
	and 
	\begin{align*}
		\lim_{\varepsilon \rightarrow 0} \varepsilon \log \mathbb{E} \tau^\varepsilon (S_{r_2}, D_{r_1,r_3}) 
		\geq V.
	\end{align*}
\end{proposition}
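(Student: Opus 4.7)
The plan is to run the classical Freidlin--Wentzell Markov-chain argument, cf.~\cite[Section~5.7]{LDP1998}, combining the per-window exit bound of Lemma~\ref{LDP_lower_bound} with the return-time bound of Lemma~\ref{bound_on_stopping_time}.

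Fix $\beta>0$ and $\delta\in(0,\beta/4)$. Since $s\mapsto V(r_1,s,r_3)$ is continuous at $s=1$ and equals $V$ there, I would choose $\alpha\in(r_1,1)$ and $\gamma\in(1,r_3)$ close enough to~$1$ that $V(r_1,s,r_3)\geq V-\delta$ on $[\alpha,\gamma]$. Reapplying the pointwise inequality \eqref{potential_lower_bound} to an arbitrary starting point $x\in\overline{D_{\alpha,\gamma}}$ (rather than to $x\in S_s$ as in Lemma~\ref{LDP_lower_bound}) upgrades the sphere-wise bound to the annulus-wise bound
\[
\limsup_{\varepsilon\to 0}\varepsilon\log\mathbb{P}\bigl(\tau^\varepsilon(\overline{D_{\alpha,\gamma}},D_{r_1,r_3})\leq T\bigr)\leq -(V-\delta)
\]
for every fixed $T>0$. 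Separately, I would handle the initial relaxation: the ODE \eqref{ODE} drives $S_{r_2}$ into $D_{\alpha,\gamma}$ within some fixed time $T_0$ while keeping the orbit in a compact subset of $D_{r_1,r_3}$, so by continuous dependence of the SDE on the noise on $[0,T_0]$ both $\mathbb{P}\bigl(X^\varepsilon_{T_0}(S_{r_2})\subset\overline{D_{\alpha,\gamma}}\bigr)\to 1$ and $\mathbb{P}\bigl(\tau^\varepsilon(S_{r_2},D_{r_1,r_3})\leq T_0\bigr)\to 0$ as $\varepsilon\to 0$.

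From time $T_0$ onward I iterate windows of length $T$. Inside each window the exit probability from $\overline{D_{\alpha,\gamma}}$ is at most $\exp(-(V-2\delta)/\varepsilon)$ by the display above. Between two consecutive windows, should the image leave $\overline{D_{\alpha,\gamma}}$ while remaining in $D_{r_1,r_3}$, Lemma~\ref{bound_on_stopping_time} applied to a bounded annulus containing the current image, with target $D_{\alpha,\gamma}$, guarantees a return within time $T$ with failure probability also at most $\exp(-(V-2\delta)/\varepsilon)$, once $T$ has been taken large enough. A Markov-property union bound over $N:=\lceil\exp((V-\beta)/\varepsilon)/T\rceil$ cycles then gives
\[
\mathbb{P}\bigl(\tau^\varepsilon(S_{r_2},D_{r_1,r_3})\leq T_0+NT\bigr)\leq o(1)+2N\exp(-(V-2\delta)/\varepsilon)\longrightarrow 0
\]
since $2\delta<\beta$, and this yields the first claim. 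The second claim drops out of Markov's inequality applied with $t_\varepsilon:=\exp((V-\beta)/\varepsilon)$: $\mathbb{E}\tau^\varepsilon\geq t_\varepsilon\,\mathbb{P}(\tau^\varepsilon>t_\varepsilon)$, so $\varepsilon\log\mathbb{E}\tau^\varepsilon\geq V-\beta+o(1)$, and letting $\beta\downarrow 0$ gives the bound~$\geq V$.

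The hard part will be the Markov-chain bookkeeping: selecting the inner neighborhood $\overline{D_{\alpha,\gamma}}$ and the larger bounded annulus in Lemma~\ref{bound_on_stopping_time} so that both the per-window exit rate and the return rate are at least $V-O(\delta)$. The exit side uses $V(r_1,s,r_3)\geq V-\delta$ on $[\alpha,\gamma]$; the return side uses that $V(0,r_1'',\infty)\geq V$ whenever the inner radius $r_1''$ of the auxiliary annulus is close enough to~$1$, which follows from $u(0)\geq u(r_1^2)$ and $u(\infty)\geq u(r_3^2)$ by monotonicity of $u$ on either side of its minimum at~$1$.
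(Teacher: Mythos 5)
Your overall strategy --- Freidlin--Wentzell cycles built from the exit estimate of Lemma \ref{LDP_lower_bound} and the return estimate of Lemma \ref{bound_on_stopping_time}, a union bound over roughly $\exp((V-\beta)/\varepsilon)$ cycles, and Markov's inequality for the expectation --- is the paper's, and both the initial-relaxation step and the final Markov-inequality step are fine. The gap is in the cycle bookkeeping, and it is not merely technical. Your per-window exit bound applies only when the image of $S_{r_2}$ lies in $\overline{D_{\alpha,\gamma}}$ at the \emph{start} of the window, while Lemma \ref{bound_on_stopping_time} controls only the \emph{duration} of a return to $D_{\alpha,\gamma}$: it says nothing about the probability of exiting $D_{r_1,r_3}$ during the return, and even a successful return (in the sense $\sigma^\varepsilon\le T$) does not place the image inside $\overline{D_{\alpha,\gamma}}$ at the next deterministic window boundary. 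So there are stretches of time --- the excursions outside $\overline{D_{\alpha,\gamma}}$ --- on which neither of your two estimates bounds the exit probability; a point of the image sitting at radius just above $r_1$ at the start of a window exits within that window with probability of order one, not $\exp(-(V-2\delta)/\varepsilon)$. Moreover, the probability of catching the image outside $\overline{D_{\alpha,\gamma}}$ at a fixed time is only of order $\exp(-V(\alpha,1,\gamma)/\varepsilon)$, and since you must take $\alpha,\gamma$ close to $1$ to get the exit rate $V-\delta$, the quantity $V(\alpha,1,\gamma)$ is close to $0$; summing such terms over $N\approx\exp((V-\beta)/\varepsilon)$ windows destroys the bound. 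Relatedly, your ``bounded annulus containing the current image'' may have inner radius as small as $r_1$ (or arbitrarily close to $0$ when $r_1=0$), and $V(0,r_1,\infty)=2(u(0)-u(r_1^2))$ need not dominate $V(r_1,1,r_3)$, so the claimed return rate is not available there.

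The paper avoids all of this by replacing deterministic windows with stopping times $\sigma_n,\rho_n$ attached to two nested annuli $D_{1-\eta,1+\eta}\subset D_{1-2\eta,1+2\eta}$: every excursion begins at a time $\rho_n$ at which the escaping point sits on $S_{1-2\eta}\cup S_{1+2\eta}$, so the probability of exiting $D_{r_1,r_3}$ within time $T$ \emph{during the excursion} is still controlled by Lemma \ref{LDP_lower_bound} at rate $V(r_1,1\pm2\eta,r_3)>V-\beta/4$, while Lemma \ref{bound_on_stopping_time} bounds the excursion length. This creates a new problem --- the number of excursions before time $t$ is no longer $t/T$ --- which the paper solves with a third estimate absent from your proposal: a Gaussian tail bound on the driving noise, estimate \eqref{estimate_length_interval}, showing that each excursion lasts at least a fixed time $T_0$ except on an event of probability $O(\exp(-(V-\beta/2)/\varepsilon))$, because moving from $D_{1-\eta,1+\eta}$ out of $D_{1-2\eta,1+2\eta}$ against the inward drift forces the noise to exceed $\eta$ in supremum norm. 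You need this ingredient, or an equivalent device, to convert the count of cycles into elapsed time; without it the argument does not close.
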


\begin{proof}
	Let $\beta < V(r_1,r_2,r_3)$ and
	$\eta >0$ be small enough such that $ r_1 < 1-2 \eta$, $r_3 > 1 + 2 \eta$, 
	$V(r_1, 1-2 \eta, r_3) > V - \beta / 4$ and $V(r_1, 1+2 \eta, r_3) > V - \beta / 4$.
	Let  $\rho_0 =0$ and for $n \in \mathbb{N}_0$ define the stopping times
	\begin{align*}
		\sigma_n &:= 
		\inf \big\{ t \geq \rho_n : 
			\left| X_t^\varepsilon (x) \right| \in (1- \eta, 1+\eta) \textrm{ for all } 
			x \in S_{r_2} \\
			& \qquad \qquad \qquad \qquad \textrm{ or }
			\left| X_t^\varepsilon (x) \right| \not\in (r_1, r_3) \textrm{ for some } x \in S_{r_2}
			\big\}, \\
		\rho_{n+1} &:= \inf \left\{ t \geq \sigma_n : 
			\left| X_t^\varepsilon (x) \right| \not\in (1- 2 \eta, 1+ 2 \eta) 
			\textrm{ for some } x \in S_{r_2} \right\}
	\end{align*}
	with convention that $\rho_{n+1} = \infty $ if 
	$ \sigma_n = \tau^\varepsilon (S_{r_2}, D_{r_1,r_3})$.
	During each time interval $[ \rho_n, \sigma_n]$ one point of the semi-flow either leaves the annulus
	$D_{r_1, r_3}$ or the semi-flow reenters the smaller annulus  $D_{1-\eta, 1+\eta}$. 
	Note that necessarily $\tau^\varepsilon (S_{r_2}, D_{r_1,r_3})= \sigma_n$ for some $n \in \mathbb{N}_0$.
	\begin{figure}[h]
		\centering
		\includegraphics[width=0.8\textwidth]{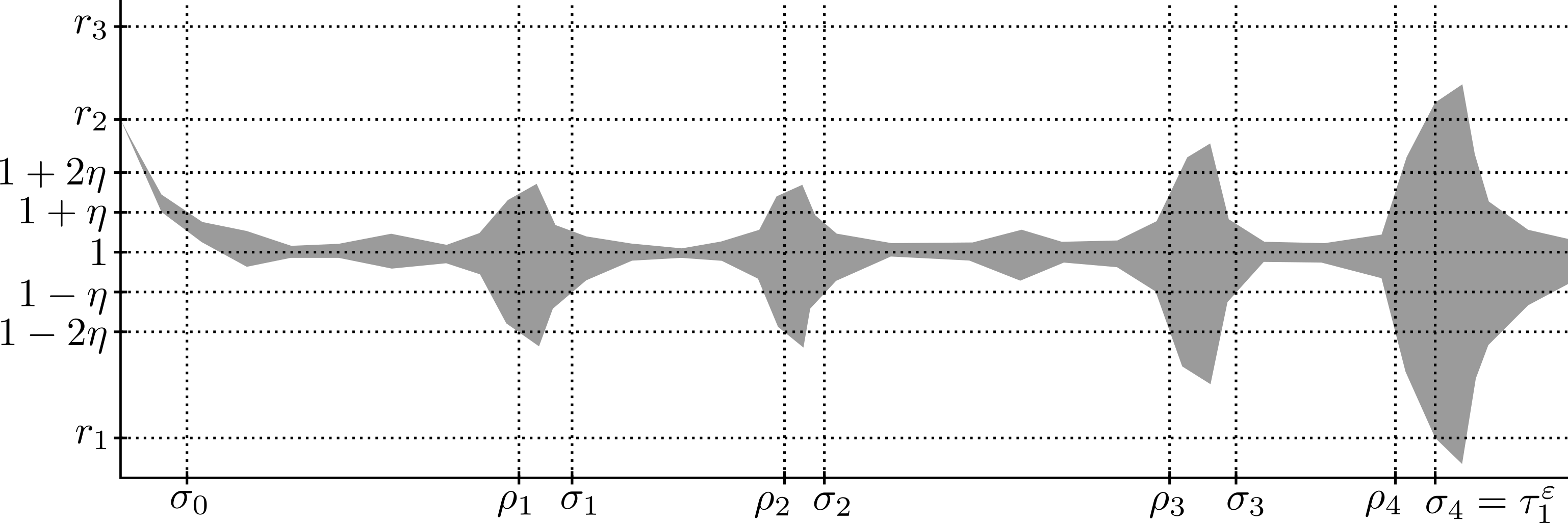}
		\caption{Outline of the set $|X_t^\varepsilon(S_{r_2})|$ and the stopping times $\sigma_n$ and $\rho_n$}
	\end{figure}
	\\
	By Lemma \ref{bound_on_stopping_time} there exists an $T >0$ and $\varepsilon_1>0$ such that
	\begin{align*}
		\mathbb{P} \left( \sigma_0 >T \right) &\leq 
		\mathbb{P} \left( 
			\sigma^\varepsilon ( S_{r_2}, D_{1-\eta,1+\eta}) > T \right) \\
		&\leq \exp(-(V(r_1,r_2,r_3)-\beta)/\varepsilon)
	\end{align*}
	and
	\begin{align*}
		\mathbb{P} \left( \sigma_n - \rho_n >T \right) 
		&\leq \mathbb{P} \left( \sigma^\varepsilon ( \overline{D_{1-2\eta,1+2\eta}},
			 D_{1-\eta,1+\eta}) 
			>T \right) \\
		&\leq \exp(-(V-\beta/2)/\varepsilon) 
	\end{align*}
	for all $n \in \mathbb{N}$ and $\varepsilon \leq \varepsilon_1$.
	Using Lemma \ref{LDP_lower_bound}, there exists $\varepsilon_2>0$ such that
	\begin{align}
		\label{probability_sigma_0}
		\begin{split}
		\mathbb{P} ( \tau^\varepsilon (S_{r_2}, D_{r_1,r_3}) = \sigma_0 ) 
		&\leq \mathbb{P} \left( \sigma_0  >T \right) + 
			\mathbb{P} (\tau^\varepsilon (S_{r_2}, D_{r_1,r_3})  \leq T) \\
		& \leq  2 \exp(-(V(r_1,r_2,r_3)-\beta)/\varepsilon)
		\end{split}
	\end{align}
	and 
	\begin{align}
		\label{probability_sigma_n}
		\begin{split}
		\mathbb{P} ( \tau^\varepsilon (S_{r_2}, D_{r_1,r_3}) = \sigma_n ) 
		&\leq \mathbb{P} \left( \sigma_n -\rho_n  >T \right) + 
			\mathbb{P} (\tau^\varepsilon (S_{1-2\eta}, D_{r_1,r_3})  \leq T) \\
		& \quad	 + \mathbb{P} (\tau^\varepsilon (S_{1+2\eta}, D_{r_1,r_3})  \leq T)\\
		& \leq 3  \exp(-(V-\beta/2)/\varepsilon)
		\end{split}
	\end{align}
	for all $n \in \mathbb{N}$ and $ \varepsilon \leq \varepsilon_2$. Choose $T_0>0$ such that 
	$2d T_0 (V -\beta/2) \leq \eta^2$. 
	Then, for all $n \in \mathbb{N}$
	\begin{align}
		\label{estimate_length_interval}
		\begin{split}
		\mathbb{P} \left( \rho_n - \sigma_{n-1} \leq T_0 \right) 
		&\leq \mathbb{P} \left( \sup_{t \in [0,T_0]} \sqrt{\varepsilon} \left| W_t \right| \geq \eta \right) 
		\leq 4d \exp ( - \eta^2 / (2d T_0 \varepsilon)) \\
		&\leq 4d \exp(-(V-\beta/2)/\varepsilon).
		\end{split}
	\end{align}
	The event $\left\{ \tau^\varepsilon (S_{r_2}, D_{r_1,r_3}) \leq kT_0 \right\}$ implies that either
	$\left\{ \tau^\varepsilon (S_{r_2}, D_{r_1,r_3}) = \sigma_n \right\}$ for some $0 \leq n \leq k$ or
	that at least one of the interval $[\sigma_n, \sigma_{n+1}]$ for $0 \leq n < k$ is at most of length $T_0$.
	Combining the estimates \eqref{probability_sigma_n} and 
	\eqref{estimate_length_interval}, it follows that
	\begin{align*}
		\mathbb{P} \left( \tau^\varepsilon (S_{r_2}, D_{r_1,r_3}) \leq kT_0 \right)
		&\leq \sum_{n=0}^{k} \mathbb{P} \left( \tau^\varepsilon (S_{r_2}, D_{r_1,r_3}) = \sigma_n  \right)
			+ \sum_{n=1}^{k} \mathbb{P} \left( \rho_n - \sigma_{n-1} \leq T_0 \right) \\
		&\leq \mathbb{P} ( \tau^\varepsilon (S_{r_2}, D_{r_1,r_3}) = \sigma_0 ) +
			(3+4d)k \exp(-(V-\beta/2)/\varepsilon) 	
	\end{align*}
	for all $k \in \mathbb{N}$ and 
	$\varepsilon \leq \varepsilon_0 := \min \left\{ \varepsilon_1, \varepsilon_2 \right\}$.
	Choose $k$ to be $T_0^{-1} \exp((V-\beta)/\varepsilon)$ rounded up to integers. Hence,
	\begin{align*}
		\mathbb{P} \left( \tau^\varepsilon (S_{r_2}, D_{r_1,r_3}) \leq \exp((V-\beta)/\varepsilon) \right)
		&\leq \mathbb{P} \left( \tau^\varepsilon (S_{r_2}, D_{r_1,r_3}) \leq kT_0 \right) \\
		& \leq \mathbb{P} ( \tau^\varepsilon (S_{r_2}, D_{r_1,r_3}) = \sigma_0 ) + 
			8d T_0^{-1} \exp(-\beta/ (2\varepsilon)) 
	\end{align*}
	for small enough $\varepsilon$. By estimate \eqref{probability_sigma_0}, the right side of the
	inequality converges to zero as $\varepsilon \rightarrow 0$. 
	The lower bound for $\mathbb{E} \tau^\varepsilon (S_{r_2}, D_{r_1,r_3}) $ follows by Markov's inequality.
\end{proof}

\begin{corollary}
	\label{cor_lower_bound}
	Set $V:=V(0,1,\infty)$. For any $\beta >0$ there exists $\delta_0>0$ such that
	\begin{align*}
		\lim_{\varepsilon \rightarrow 0}  \mathbb{P} 
		\left( \tau^\varepsilon_{1,\delta} > \exp ( (V - \beta )/ \varepsilon ) \right) =1
	\end{align*}
	and 
	\begin{align*}
		\lim_{\varepsilon \rightarrow 0} \varepsilon \log \mathbb{E} \tau^\varepsilon_{1,\delta}
		 \geq V
	\end{align*}
	for any $0<\delta<\delta_0$.
\end{corollary}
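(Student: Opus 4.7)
The plan is to reduce the statement to Proposition~\ref{prop_lower_bound} via an almost-sure pathwise comparison $\tau_{1,\delta}^{\varepsilon}\geq \tau^{\varepsilon}(S_1,D_{r_1,r_3})$ valid for small enough $\delta$, where $r_1,r_3$ are chosen so that $V(r_1,1,r_3)$ is close to $V$. The intuitive reason such a comparison holds is topological: as long as no trajectory started on $S_1$ has reached near $0$, the image $X_s^{\varepsilon}(S_1)$ still surrounds a ball of positive radius, and hence cannot have small diameter.

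Given $\beta>0$, I would first select the radii. Using continuity of $u$ at $0$ together with $u(\infty)=\infty$, pick $r_1\in(0,1)$ with $2(u(r_1^2)-u(1))>V-\beta/2$ and then $r_3>1$ large enough that $u(r_3^2)\geq u(r_1^2)$, so that $V(r_1,1,r_3)=2(u(r_1^2)-u(1))>V-\beta/2$. Set $\delta_0:=r_1$.

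The core is the pathwise comparison. Since $-\nabla U$ is locally Lipschitz, the SDE \eqref{original_SDE} generates a stochastic flow of homeomorphisms, so $X_s^{\varepsilon}(\omega,\cdot)$ is almost surely a homeomorphism of $\mathbb{R}^d$. Hence $X_s^{\varepsilon}(S_1)$ is a topological $(d-1)$-sphere and $X_s^{\varepsilon}(B_1)$ is the bounded component of $\mathbb{R}^d\setminus X_s^{\varepsilon}(S_1)$. On the event $\{\tau^{\varepsilon}(S_1,D_{r_1,r_3})>t\}$, we have $X_s^{\varepsilon}(S_1)\cap\overline{B_{r_1}}=\emptyset$ for all $s\in[0,t]$. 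Joint continuity of the flow in $(s,x)$ forces, for each fixed $y\in B_{r_1}$, the membership $y\in X_s^{\varepsilon}(B_1)$ to be locally constant in $s$ on $[0,t]$; since it holds at $s=0$, it holds throughout, so $B_{r_1}\subset X_s^{\varepsilon}(B_1)$ for all $s\in[0,t]$. If at some such $s$ one had $\mathrm{diam}\,X_s^{\varepsilon}(S_1)\leq \delta$, then $X_s^{\varepsilon}(S_1)$ would lie in a closed ball of radius $\delta$, and so would the bounded complementary component $X_s^{\varepsilon}(B_1)$ (the unbounded component contains the unbounded complement of that ball); but then $B_{r_1}\subset X_s^{\varepsilon}(B_1)$ forces $r_1\leq \delta<r_1$, a contradiction. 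This yields $\tau_{1,\delta}^{\varepsilon}\geq \tau^{\varepsilon}(S_1,D_{r_1,r_3})$ almost surely.

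Finally, applying Proposition~\ref{prop_lower_bound} with $r_2=1$ to the parameter $\beta/2$ gives $\mathbb{P}(\tau^{\varepsilon}(S_1,D_{r_1,r_3})>\exp((V(r_1,1,r_3)-\beta/2)/\varepsilon))\to 1$. Since $V(r_1,1,r_3)-\beta/2>V-\beta$ by the choice of radii, the pathwise comparison yields $\mathbb{P}(\tau_{1,\delta}^{\varepsilon}>\exp((V-\beta)/\varepsilon))\to 1$, and the expectation statement follows from $\mathbb{E}\tau_{1,\delta}^{\varepsilon}\geq \mathbb{E}\tau^{\varepsilon}(S_1,D_{r_1,r_3})$. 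The main obstacle is the topological step, which relies on two ingredients: the fact that the flow is a homeomorphism (standard Kunita-type stochastic flow theory under the $\mathcal{C}^2$ assumption on $u$), and a clean Jordan–Brouwer argument for the bounded complementary component. Neither should cause serious difficulty, but both must be invoked carefully to ensure the topological comparison holds almost surely and for every $s$ in the relevant time interval simultaneously.
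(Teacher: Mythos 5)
Your proposal is correct and follows essentially the same route as the paper, which proves the corollary by the one-line observation $\tau_{1,\delta}^\varepsilon \geq \tau^\varepsilon(S_1, D_{\delta/2,\infty})$ and then invokes Proposition \ref{prop_lower_bound}; you use the annulus $D_{r_1,r_3}$ instead of $D_{\delta/2,\infty}$ and make explicit the Jordan--Brouwer separation argument that the paper leaves implicit. The extra topological detail is a welcome justification of the comparison rather than a deviation in method.
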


\begin{proof}
	Observe that $ \tau_{1, \delta}^\varepsilon \geq \tau^\varepsilon (S_1, D_{\delta /2 , \infty})$.
\end{proof}

\subsection{Upper bound for $\tau_{3,\delta}^\varepsilon$}
\label{section_upper_bound}

In this subsection, we give an upper bound for $\tau^\varepsilon_{3, \delta}$.
Here, $\tau^\varepsilon_{3, \delta}$ is
associated to the solution of \eqref{original_SDE}
where the differential equation \eqref{original_SDE} is additionally assumed to decay strongly.
Since $X_t^\varepsilon$ satisfies the LDP, it is sufficient to choose a sample path to get a lower
estimate on the probability that $\tau^\varepsilon_{3, \delta}$ is smaller than some fixed time. 
Using this probability as the success probability of a geometric distribution, we get the upper bound 
for $\tau^\varepsilon_{3, \delta}$. 

\begin{lemma}
	\label{LDP_upper_bound}
	Assume that the SDE \eqref{original_SDE} is strongly contracting.
	For any $\delta>0$ 
	\begin{align*}
	 	\lim_{T \rightarrow \infty}	\liminf_{\varepsilon \rightarrow 0} \varepsilon \log 
		\mathbb{P} \left( \tau_{3,\delta}^\varepsilon \leq T \right) 
		\geq -V(0,1,\infty).
	\end{align*}
\end{lemma}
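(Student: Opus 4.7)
My plan is to reduce the claim to constructing a single deterministic control $g$ whose associated controlled flow collapses all initial conditions into a common small ball. Concretely, I show that for every $\varepsilon'>0$ and all sufficiently large $T$, there exists $g \in C_0([0,T])$ with
\begin{align*}
    \hat{I}_T(g)\leq V(0,1,\infty)+\varepsilon'
    \quad\text{and}\quad
    \sup_{x,y\in\mathbb{R}^d}|\phi(T,x)-\phi(T,y)|<\delta/2,
\end{align*}
where $\phi:=F_T(g)$. Given such a $g$, the set $O=\{\psi\in C([0,T]\times\mathbb{R}^d,\mathbb{R}^d):\sup_{x,y}|\psi(T,x)-\psi(T,y)|<\delta\}$ is an open neighbourhood of $\phi$ and $\{X^\varepsilon\in O\}\subseteq\{\tau_{3,\delta}^\varepsilon\leq T\}$. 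The LDP lower bound for $X^\varepsilon$ (Schilder plus the contraction principle via $F_T$) yields
\begin{align*}
    \liminf_{\varepsilon\to0}\varepsilon\log\mathbb{P}(\tau_{3,\delta}^\varepsilon\leq T)
    \geq -I_T(\phi)\geq -\hat{I}_T(g)\geq -V(0,1,\infty)-\varepsilon'.
\end{align*}
Sending $\varepsilon'\to0$ and then $T\to\infty$ gives the lemma.

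\paragraph{Construction of the control.} The control is assembled in three stages, exploiting that $\hat{I}_T(g)=0$ whenever $g$ is constant. On $[0,T_1]$ I take $g\equiv0$: by the strong contraction assumption, the deterministic semi-flow of \eqref{ODE} maps all of $\mathbb{R}^d$ into the closed ball $\overline{B_r}$ by time $T_1$. On $[T_1,T_1+T_2]$ I still keep $g\equiv0$: the deterministic flow now brings every trajectory into an arbitrarily small neighbourhood of $S_1\cup\{0\}$. Near $t=T_1+T_2$ a tiny smooth perturbation of $g$ (with action cost $\to 0$ as its amplitude shrinks) dislodges the trajectory stuck at the unstable fixed point $0$ so that it too enters a neighbourhood of $S_1$. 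On $[T_1+T_2,T]$ I apply a non-trivial control built around a reference curve $\alpha:[T_1+T_2,T]\to\mathbb{R}^d$ that runs along the reverse gradient flow $\dot\alpha=+\nabla U(\alpha)$ from a point near $S_1$ to the origin. Along such a reverse gradient curve the bound \eqref{potential_lower_bound} is attained with equality, contributing a cost of $2(U(0)-U(1))=V(0,1,\infty)$ in the limit $T\to\infty$; adding the negligible cost from phase three keeps $\hat{I}_T(g)\leq V+\varepsilon'$.

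\paragraph{Main obstacle.} The hardest point is to verify that the \emph{same} control, whose action budget is only $V+\varepsilon'$, simultaneously drives every trajectory of the controlled ODE into a common $\delta/2$-ball, not just the single reference trajectory $\alpha$. I expect this step to use the radial symmetry of $U$: by choosing $\alpha$ along a ray through the origin and choosing $g$ to vary slowly, the controlled vector field $-\nabla U(\cdot)+\dot g(t)$ can be arranged to have, at each instant, a tube of attraction around $\alpha(t)$ whose tangential contraction rate stays bounded below and whose radius shrinks as $\alpha(t)\to 0$. A linearisation in coordinates tangential to $\alpha$ then shows that the trajectories inherited from phase two (which lie within a small neighbourhood of $S_1$) are swept into this shrinking tube and carried to a $\delta/2$-ball around the origin by time $T$. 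Making this adiabatic comparison quantitative while holding the total action at $V+\varepsilon'$ is where the assumption $T\to\infty$ is essential, and is the technically delicate portion of the proof.
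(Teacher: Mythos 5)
Your reduction to the LDP lower bound over the open set $O$ is exactly the paper's strategy, and your phases one and two (wait for the strong contraction to bring everything into a ball, then let the deterministic flow carry it near $S_1\cup\{0\}$, with a negligible-cost nudge for the point stuck at the origin) match the paper's Step 1. The genuine gap is in phase three, precisely at the point you flag as the "main obstacle": the resolution you sketch cannot work. The control enters the equation $\dot\phi(t,x)=-\nabla U(\phi(t,x))+\dot g(t)$ as a \emph{spatially constant} forcing, so while $\dot g(t)=2\nabla U(\alpha(t))$ pushes the reference trajectory radially inward along its ray, it pushes the antipodal points of $S_1$ radially \emph{outward} and the orthogonal points tangentially; no single $g$ can move all points of a sphere toward the origin. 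Worse, the origin is a repelling fixed point of $-\nabla U$ (since $u$ attains its minimum at $R^\ast=1$, one has $u'(0)<0$), so a neighbourhood of $0$ is exactly where nearby trajectories of the controlled flow separate: there is no ``tube of attraction'' around a curve terminating at $0$, and the diameter of $\phi(t,\cdot)(\mathbb{R}^d)$ cannot be contracted there. Radial symmetry and adiabatic slowness do not repair this, because the obstruction is the sign of the linearization at $0$, not the speed of the sweep.

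The paper's construction avoids collapsing anything at the origin. It spends the budget $V(0,1,\infty)$ to drive \emph{one} trajectory (the point antipodal to the push direction) over the barrier, through the origin, and into the half-space where the first coordinate is positive. It then applies a constant push of small amplitude $\beta^\alpha=\alpha\eta^\alpha$ over a long time interval; by radial symmetry the ratio $\Pi_1(\phi)/\bigl(\sum_{k\ge2}\Pi_k(\phi)^2\bigr)^{1/2}$ evolves under the control alone, so this push (action cost $8\alpha\to0$) rotates \emph{every} remaining trajectory into that half-space without any further barrier crossing. There, within the annulus $c_1^\alpha\le|x|\le2$, the constantly forced system has a genuine exponentially attracting fixed point $z=(c_2^\alpha,0,\dots,0)$ near the stable sphere, and Gronwall's inequality gives the collapse to diameter $\delta$. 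So the two missing ideas are: cross the barrier with a single trajectory rather than with all of them, and perform the final contraction near the stable sphere (where $-\nabla U$ plus a small constant forcing is contracting by convexity of $u$) rather than near the unstable origin.
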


\begin{proof}
	Denote by $u'$ the first derivative of $u$.
	Let $0< \alpha <1$ be small enough such that $u'(4)\geq 2 \alpha$.  Set
	$c_1^\alpha	:= \max \left\{ 1- \alpha, \sup \left\{ 0<x<1 : u'(|x|^2) \leq -\alpha \right\} \right\}$
	and $c_2^\alpha := \inf \left\{ x>1 : u'(|x|^2) \geq 2 \alpha \right\} \leq 2$.\\
	We choose $g^\alpha (t):= (\int_0^t h^\alpha (s) \, ds , 0 , \dots, 0) \in \mathbb{R}^d$
	with
	\begin{align*}
		h^\alpha (s) :=
		\begin{cases}
			0, & 
				\textrm{for } 0 \leq s \leq T_1^\alpha \textrm{ or } T_5^\alpha < s \leq T_6^\alpha \\
			3 \alpha, & 
				\textrm{for } T_1^\alpha < s \leq T_2^\alpha  \\
			2 \nabla \tilde{U}(\varphi(s- T_2^\alpha)), &
				\textrm{for } T_2^\alpha < s \leq T_3^\alpha \\
			(- 2 u'(0) +1) \alpha, & 
				\textrm{for } T_3^\alpha < s \leq T_4^\alpha \\
			\beta^\alpha, & 
				\textrm{for } T_4^\alpha < s \leq T_5^\alpha \\
			4 \alpha c_2^\alpha, &
				\textrm{for } T_6^\alpha < s \leq T_7^\alpha 
		\end{cases}
	\end{align*}
	for some $\beta^\alpha>0$, $0< T_1^\alpha< T_2^\alpha< \dots < T_6^\alpha < \infty$
	determined in the following and
	where $\varphi$ is the solution of
	\begin{align*}
		\dot{\varphi} (s) = \nabla \tilde{U}(\varphi(s)) \quad \textrm{on } \mathbb{R}
	\end{align*}
	started in $\varphi(0)= -c_1^\alpha$ where $\tilde{U}(x):= u(x^2)$. Hence, 
	\begin{align*}
		\hat{I}_{T_3^\alpha -T_2^\alpha} (g^\alpha (\cdot + T_2^\alpha)) &= 
		2 \int_{0}^{T_3^\alpha-T_2^\alpha} \langle \dot{\varphi} (s) ,  \nabla \tilde{U} (\varphi (s)) \rangle ds
		= 2 ( \tilde{U} (\varphi(T_3^\alpha-T_2^\alpha)) -\tilde{U}(\varphi(0)) ) \\
		&\leq 2 (u(0)-u(1))= V(0,1,\infty).
	\end{align*}
	Moreover, $\hat{I}_{T_{j+1}^\alpha -T_j^\alpha} (g^\alpha (\cdot + T_j^\alpha)) = 0$ for $j=0,4$ and
	\begin{align*}
		 \hat{I}_{T_{j+1}^\alpha -T_j^\alpha} (g^\alpha(\cdot + T_j^\alpha )) 
		= \int_{T_j^\alpha}^{T_{j+1}^\alpha} \left| h^\alpha (s) \right|^2 ds
		\leq (h^\alpha (T_{j+1}^\alpha))^2 \left(T_{j+1}^\alpha -T_j^\alpha \right)
	\end{align*}
	for $j=1,3,4,6$.
	Denote by $F(g) := F_{T_7^\alpha}(g)$ the semi-flow associated
	to \eqref{semi-flow_with_g}. \\
	In the following, we choose $\beta^\alpha$ and $T_i^\alpha$ for $i=1,2,...,7$ such that
	\begin{align*}
		\lim_{\alpha \rightarrow 0} (h^\alpha (T_{j+1}^\alpha))^2 \left(T_{j+1}^\alpha -T_j^\alpha \right) =0
	\end{align*}
	for $j=1,3,4,6$ and 
	\begin{align*}
		\left| F(g^\alpha) (T_7^\alpha,x) - F(g^\alpha) (T_7^\alpha,y) \right| \leq \delta
	\end{align*}
	for all $x,y \in \mathbb{R}^d$. Then
	\begin{align*}
		\lim_{\alpha \rightarrow 0} \liminf_{\varepsilon \rightarrow 0} \varepsilon \log \mathbb{P} 
		\left( \tau_{3,\delta}^\varepsilon \leq T_7^\alpha \right) 
		&\geq - \lim_{\alpha \rightarrow 0} \hat{I}_{T_7^\alpha} (g^\alpha) 
		= - \lim_{\alpha \rightarrow 0} \sum_{j=0}^7 
			\hat{I}_{T_{j+1}^\alpha -T_j^\alpha} (g^\alpha(\cdot + T_j^\alpha )) \\
		&\geq - V(0,1,\infty)
	\end{align*}
	by LDP and the statement follows. 
	\begin{figure}[h]
		\centering
		\begin{subfigure}[c]{0.19\textwidth}
			\centering
			\includegraphics[width=0.8\textwidth]{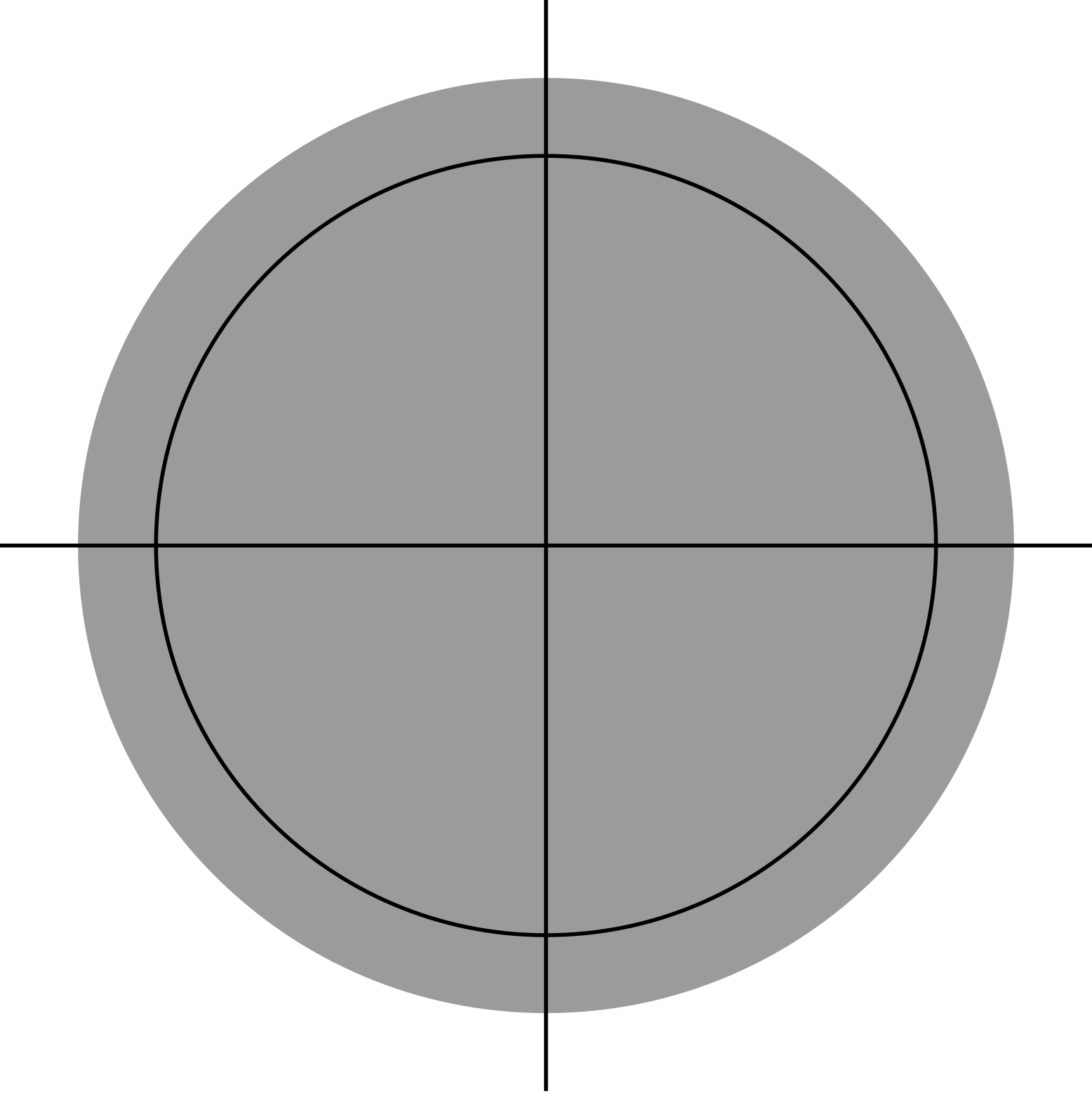}
			\subcaption{t=$T_1^\alpha$}
		\end{subfigure}
		\begin{subfigure}[c]{0.19\textwidth}
			\centering
			\includegraphics[width=0.8\textwidth]{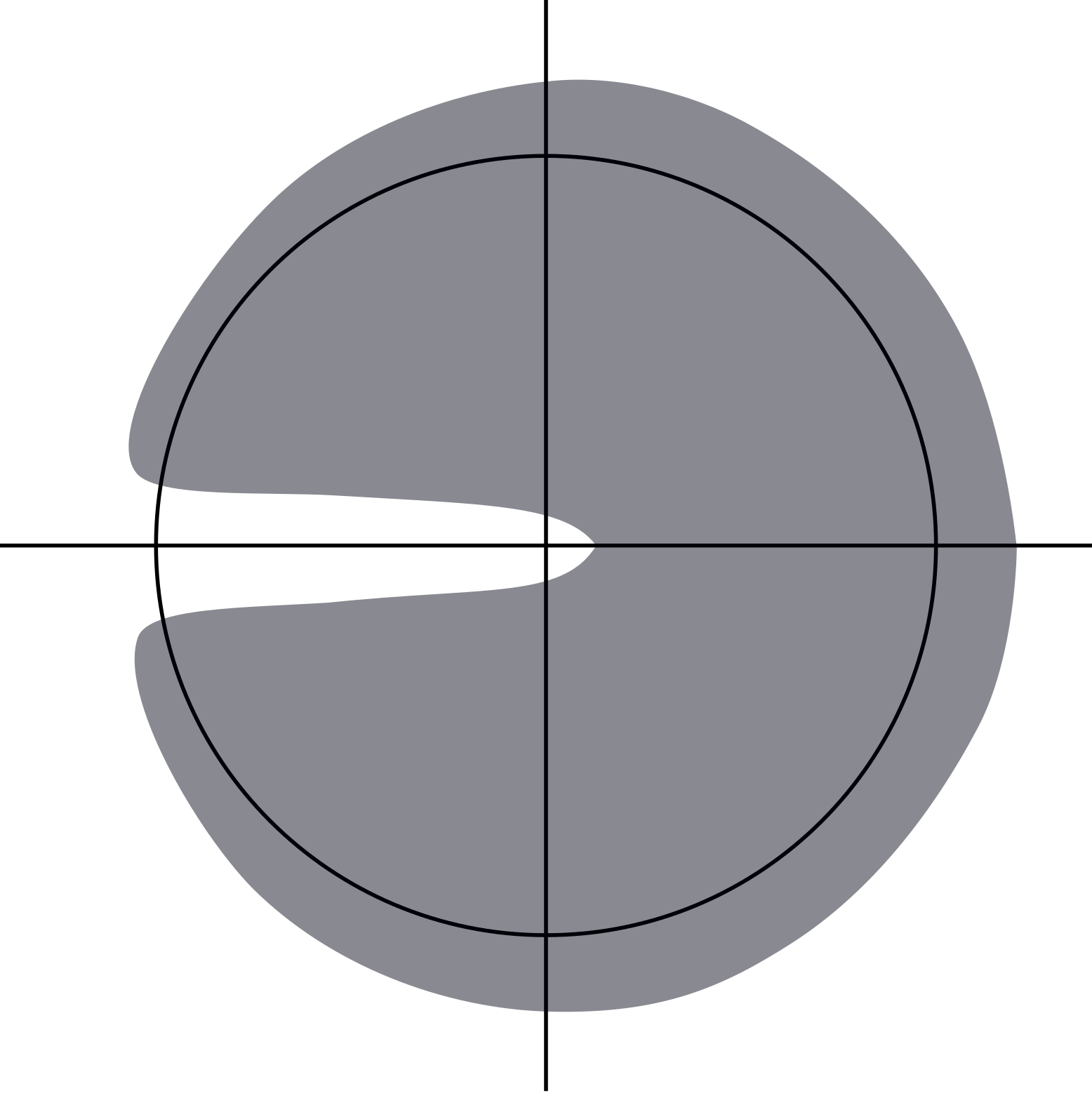}
			\subcaption{t=$T_4^\alpha$}
		\end{subfigure}
		\begin{subfigure}[c]{0.19\textwidth}
			\centering
			\includegraphics[width=0.8\textwidth]{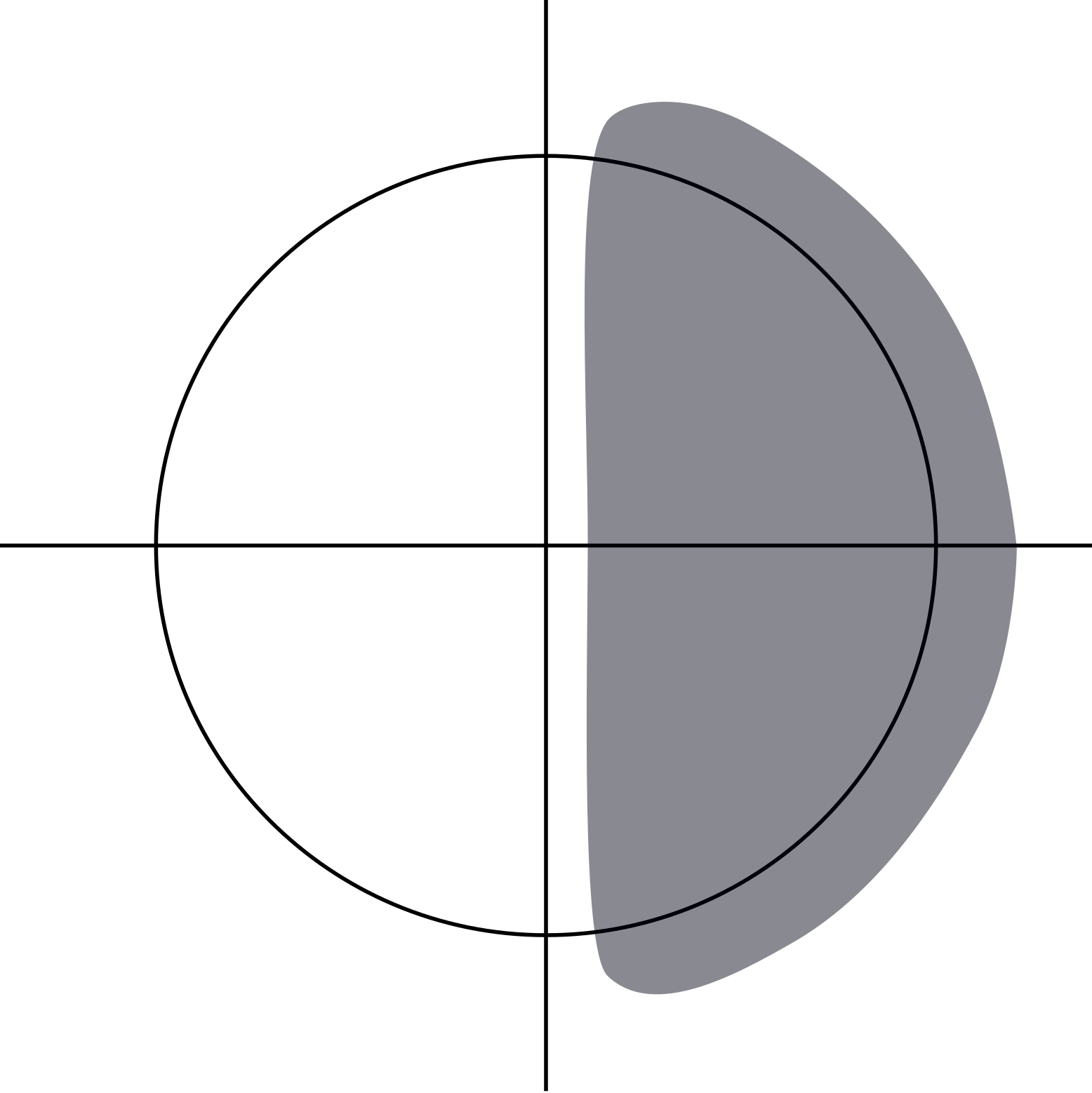}
			\subcaption{t=$T_5^\alpha$}
		\end{subfigure}
		\begin{subfigure}[c]{0.19\textwidth}
			\centering
			\includegraphics[width=0.8\textwidth]{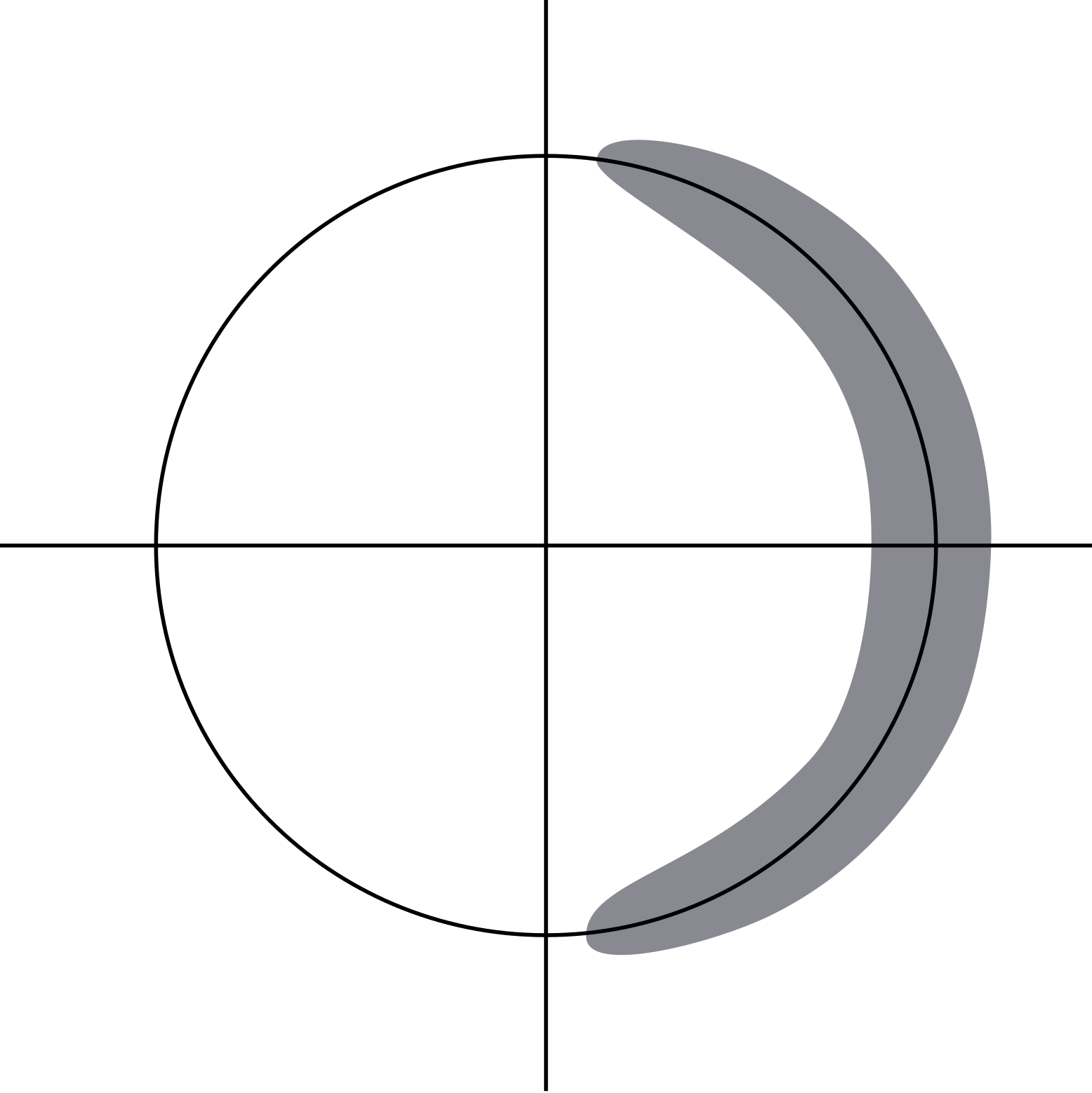}
			\subcaption{t=$T_6^\alpha$}
		\end{subfigure}
		\begin{subfigure}[c]{0.19\textwidth}
			\centering
			\includegraphics[width=0.8\textwidth]{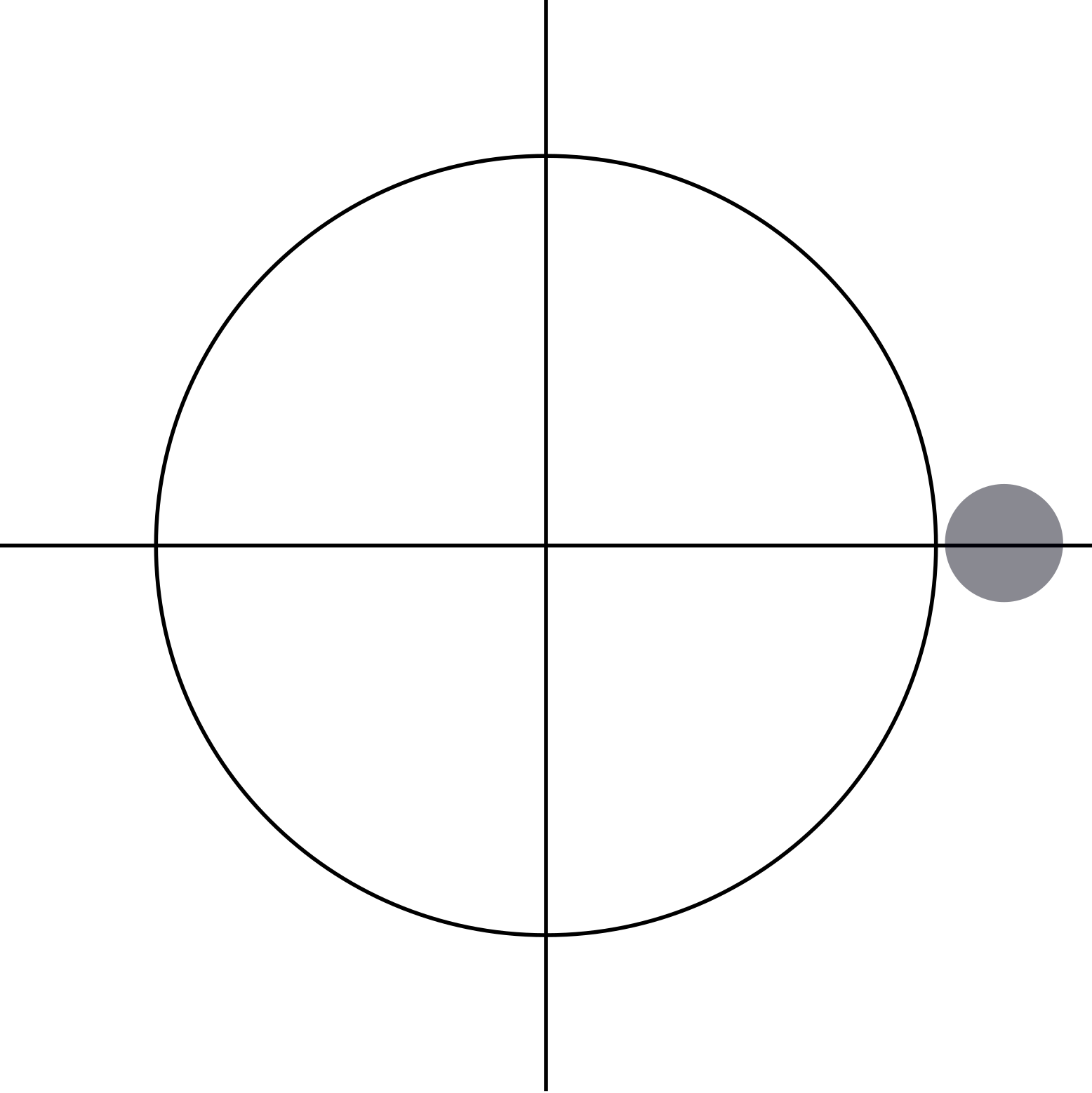}
			\subcaption{t=$T_7^\alpha$}
		\end{subfigure}
		\caption{Outline of the semi-flow $F(g^\alpha)$ in $\mathbb{R}^2$ at time $t$}
	\end{figure}
	\\
	\emph{Step 1:}
	Since \eqref{original_SDE} is strongly contracting, we can choose $T_1^\alpha$ 
	such that $\left| F(g^\alpha) (T_1^\alpha,x) \right| \leq 1 +  \alpha$ for
	all $x \in \mathbb{R}^d$. 
	\\
	Define $Y(t,y) := F(g^\alpha( t + T_1^\alpha)) (t,y)$ for $y \in \mathbb{R}^d$ 
	and write $t_k^\alpha := T_k^\alpha - T_1^\alpha$ for $k=2, \dots,7$.
	Observe that it is sufficient to restrict the analysis to $Y(t,y)$ on the set $\bar{B}_{1+\alpha}$
	since $Y(t,y)$ describes the dynamics of $F(g^\alpha)$ after time $T_1^\alpha$.
	Denote by $\Pi_k$ the projection on the $k$-th component in $\mathbb{R}^d$.
	\\
	In the steps 2 to 4, we concentrate on the movement of the point 
	$y_1:= (-1- \alpha,0, \dots,0)\in \mathbb{R}^d$, choose $t_2^\alpha, t_3^\alpha$ and $t_4^\alpha$ 
	and show that $\Pi_1 ( Y(t_4^\alpha,y_1)>0$. 
	This behavior we extend to the set $\bar{B}_{1+\alpha}$ in step 5 by choosing $\beta^\alpha$ and 
	$t_5^\alpha$ suitable and showing that $\Pi_1 ( Y(t_5^\alpha, y))>0$ for all $y \in \bar{B}_{1+\alpha}$.
	Observe that $\Pi_1 ( Y(s, y))>0$ implies that $\Pi_1 ( Y(t, y))>0$ for 
	all $y \in \mathbb{R}^d$ and $0<s<t$ 
	and define $t_y := \inf \left\{ t \geq 0: \Pi_1(Y(t,y))>0  \right\}$ for $ y \in \mathbb{R}^d$.
	Hence, $\Pi_1 ( Y(t, y))>0$ for all $t \geq t_y$.
	In the steps 6 and 7, we choose $t_6^\alpha$ and $t_7^\alpha$ and show the contraction. \\
	\emph{Step 2:}
	Set $y_1:= (-1 - \alpha, 0, \dots,0)\in \mathbb{R}^d$.
	Observe that $\Pi_1(F(t \mapsto 3 \alpha t) (2,y_1)) \geq -c_1^\alpha$. Choose $t_2^\alpha= 2$. \\
	\emph{Step 3:}
	The function $\varphi$ as defined above describes the movement of 
	$Y(\cdot + t_2^\alpha,y_2)$ started in 
	$y_2:=(- c_1^\alpha,0,\dots,0)\in \mathbb{R}^d$. Choose $t_3^\alpha$ such that 
	$\varphi ( t_3^\alpha - t_2^\alpha) \geq - \alpha$. Then,
	$ \Pi_1 (Y(t_3^\alpha ,y_2)) \geq - \alpha$. \\
	\emph{Step 4:}
	Let $y_3:= (-\alpha,0, \dots,0)\in \mathbb{R}^d$.
	Observe that $\Pi_1 (F(t \mapsto (-2u'(0) +1) \alpha t) (2,y_3) ) \geq \alpha$.
	Choose $t_4^\alpha= t_3^\alpha +2$. Then, $ \Pi_1(Y (t_4^\alpha,y_1)) \geq \alpha>0$.\\
	\emph{Step 5:}
	Since $y \mapsto Y (t_4^\alpha,y)$ is continuous, there exists a neighborhood of $y_1$
	such that $\Pi_1(Y (t_4^\alpha,y)) >0$ for all $y$ in this neighborhood.
	Hence, there exists an $\eta^\alpha>0$ such that $\Pi_1(Y (t_4^\alpha,y)) <0$ for
	some $y \in S_{1+\alpha}$ implies that $\Pi (y) \geq -1 -\alpha + \eta^\alpha$.
	Observe that 
	\begin{align*}
		d \; \frac{\Pi_1(F(g^\alpha) (t,y)) }{\sqrt{\sum_{k=2}^d (\Pi_k(F(g^\alpha) (t,y)))^2 }} 
		= \frac{1 }{\sqrt{\sum_{k=2}^d (\Pi_k(F(g^\alpha) (t,y)))^2 }} \; dg^\alpha(t)
	\end{align*}
	for all $y \in \mathbb{R}^d$. 
	Observe that $|Y(t,y)| \leq 2$ for all $y \in \bar{B}_{1+\alpha}$ and $t_4^\alpha \leq t < t_y$.
	Hence, 
	\begin{align*}
		\frac{\Pi_1(Y (t,y)) }{\sqrt{\sum_{k=2}^d (\Pi_k(Y (t,y)))^2 }} 
		\geq - \frac{2}{\eta^\alpha} +
			\frac{1 }{2} \beta^\alpha (t - t_4^\alpha)
	\end{align*}
	for all $y \in S_{1+\alpha}$ and $t_4^\alpha \leq t < t_y$. 
	Set $ \beta^\alpha := \alpha  \eta^\alpha $ and 
	$ t_5^\alpha := t_4^\alpha + 8 \alpha^{-1} (\eta^\alpha)^{-2}$.
	Then, $\Pi_1(Y (t_5^\alpha,y))>0$ for any $y \in S_{1+\alpha}$.
	Moreover, 
	\begin{align*}
	 	\hat{I}_{T_{5}^\alpha -T_4^\alpha} (g^\alpha(\cdot + T_4^\alpha )) 
		= (h^\alpha (T_{5}^\alpha))^2 \left(T_{5}^\alpha -T_4^\alpha \right) 
		= (\beta^\alpha)^2 \left(t_{5}^\alpha -t_4^\alpha \right)
		=  8\alpha.
	\end{align*}
	\emph{Step 6:}
	Since $\Pi_1(Y (t_5^\alpha,y))>0$ for any $y \in \bar{B}_{1+\alpha}$ and 
	$\bar{B}_{1+\alpha}$ is closed, it follows that
	$ \min_{y \in \bar{B}_{1+\alpha}} \Pi_1(Y (t_5^\alpha,y))>0$.
	Choose $t_6^\alpha > t_5^\alpha$ 
	such that $ c_1^\alpha \leq \left| Y (t_6^\alpha,y) \right| \leq 2$
	for all $y \in \bar{B}_{1+\alpha}$. \\
	\emph{Step 7:}
	Since $ \Pi_1(Y (t_6^\alpha,x)) >0$, 
	$\left|Y (t_6^\alpha,x) \right| \geq c_1^\alpha$ and $ \Pi_1(g^\alpha(t)) \geq 0$ 
	for all $x \in \bar{B}_{1+\alpha}$ and $ t\geq 0$, 
	it holds that
	$\left| Y (t,x) \right| \geq c_1^\alpha $ for all $x \in\bar{B}_{1+\alpha}$ and 
	$ t_6^\alpha \leq t \leq t_7^\alpha$.
	Observe that $z:=(c_2^\alpha, 0, \dots ,0) \in \mathbb{R}^d$ is a fixed point 
	of $Y(t+t_6^\alpha, \cdot)$.
	By convexity of $u$,
	for any $x=(x_1,x_2, \dots, x_d) \in \mathbb{R}^d$ with $x_1>0$ and $|x|\geq c_1^\alpha$
	\begin{align*}
		d \,\left| Y(t + t_6^\alpha ,x) - z \right|^2
		&\leq - \frac{1}{2} \left| Y(t + t_6^\alpha ,x) - z \right|^2 \, 
			\left( u'(| Y(t + t_6^\alpha ,x) |^2) + u'(|z|^2)) \right) \; dt \\
		& \leq - \frac{\alpha}{2}  \left|Y(t + t_6^\alpha,x) - z \right|^2 \; dt.
	\end{align*}
	By Gronwall's inequality, it follows that
	\begin{align*}
		\left| Y(t + t_6^\alpha ,x) - z \right|
		\leq 4 \, \exp \left( - \frac{\alpha t}{4} \right).
	\end{align*}
	Choose $t_7^\alpha = t_6^\alpha + \frac{4}{\alpha} ( \log 8 -\log \delta )$.
	Combining all steps, it follows that 
	$ \left| F(g^\alpha) (T_7^\alpha,x) - F(g^\alpha) (T_7^\alpha,y) \right| \leq \delta $ 
	for all $x,y \in \mathbb{R}^d$.
\end{proof}

\begin{proposition}
	\label{upper_bound}
	Assume that the SDE \eqref{original_SDE} is strongly contracting.
	Set $V:= V(0,1,\infty)$.
	Then, for any $\delta>0$ and $\beta>0$ it holds that
	\begin{align*}
		\lim_{\varepsilon \rightarrow 0}  \mathbb{P} 
		\left( \tau_{3,\delta}^\varepsilon < \exp ( (V + \beta )/ \varepsilon ) \right) =1
	\end{align*}
	and 
	\begin{align*}
		\lim_{\varepsilon \rightarrow 0} \varepsilon \log \mathbb{E} \tau_{3,\delta}^\varepsilon \leq V.
	\end{align*}
\end{proposition}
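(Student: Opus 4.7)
The plan is to convert the large-deviation lower bound of Lemma \ref{LDP_upper_bound} into a geometric-trials estimate. First I fix $\beta>0$ and use Lemma \ref{LDP_upper_bound} to pick $T>0$ and $\varepsilon_0>0$ such that
\begin{align*}
\mathbb{P}\left( \tau_{3,\delta}^\varepsilon \leq T \right) \geq p(\varepsilon) := \exp(-(V+\beta/2)/\varepsilon)
\end{align*}
for all $0<\varepsilon \leq \varepsilon_0$. The structural point that makes iteration possible is that the control path $g^\alpha$ built in the proof of Lemma \ref{LDP_upper_bound} was constructed to contract \emph{all} of $\mathbb{R}^d$ into a ball of diameter $\delta$ in finite time; consequently the lower bound $p(\varepsilon)$ is uniform in the starting configuration of the stochastic flow at the beginning of any window of length $T$.

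Next I split time into disjoint windows $J_k := [(k-1)T, kT]$ for $k \in \mathbb{N}$, on which the Brownian increments are independent. Using the cocycle property of the RDS together with the uniformity just noted, the events $A_k$ = ``all points of $\mathbb{R}^d$ are within $\delta$ of each other by the end of window $J_k$'' each have conditional probability at least $p(\varepsilon)$ given the past, so $N := \inf\{k : A_k \text{ occurs}\}$ is stochastically dominated by a geometric random variable of parameter $p(\varepsilon)$. Since $\tau_{3,\delta}^\varepsilon \leq NT$ we obtain $\mathbb{E}\tau_{3,\delta}^\varepsilon \leq T/p(\varepsilon)$, hence $\limsup_{\varepsilon \to 0} \varepsilon \log \mathbb{E}\tau_{3,\delta}^\varepsilon \leq V + \beta/2$, and letting $\beta \to 0$ gives the expectation bound.

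For the probability statement I would set $k_\varepsilon := \lceil T^{-1} \exp((V+\beta)/\varepsilon) \rceil$, so that $k_\varepsilon T \geq \exp((V+\beta)/\varepsilon)$ and
\begin{align*}
\mathbb{P}\left( \tau_{3,\delta}^\varepsilon > \exp((V+\beta)/\varepsilon) \right) \leq \mathbb{P}( N > k_\varepsilon) \leq (1-p(\varepsilon))^{k_\varepsilon} \leq \exp(-k_\varepsilon p(\varepsilon)),
\end{align*}
and since $k_\varepsilon p(\varepsilon) \sim T^{-1} \exp(\beta/(2\varepsilon)) \to \infty$ as $\varepsilon \to 0$, the right-hand side vanishes, giving the claim.

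The main obstacle is the uniform-in-configuration step: one must verify that the control-path argument of Lemma \ref{LDP_upper_bound} actually supplies, conditional on the state of the flow at time $(k-1)T$, a lower bound of $p(\varepsilon)$ on the probability of synchronization during $J_k$. This reduces to the observation that the event ``$\sqrt{\varepsilon}(W_\cdot - W_{(k-1)T})$ stays close to $g^\alpha$ on $J_k$'' depends only on the Brownian increments over $J_k$, whose probability is bounded below by $p(\varepsilon)$ via Schilder's theorem applied exactly as in Lemma \ref{LDP_upper_bound}.
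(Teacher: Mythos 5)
Your proposal is correct and follows essentially the same route as the paper: both turn the uniform-in-initial-condition lower bound $\mathbb{P}(\tau_{3,\delta}^\varepsilon\leq T)\geq\exp(-(V+\eta)/\varepsilon)$ from Lemma \ref{LDP_upper_bound} into a geometric-trials estimate by conditioning on survival past $(k-1)T$, which is exactly your stochastic domination of $N$ by a geometric variable. The only cosmetic difference is that the paper obtains the probability statement from the expectation bound via Markov's inequality, whereas you bound the geometric tail $(1-p(\varepsilon))^{k_\varepsilon}$ directly; your explicit justification of the uniformity step (the control path contracts all of $\mathbb{R}^d$, so the conditional success probability per window is bounded below independently of the flow's configuration) is the same point the paper uses implicitly in its conditioning inequality.
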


\begin{proof}
	Let $0< \eta < \beta/2$.
	By Lemma \ref{LDP_upper_bound} there exists $\varepsilon_0>0$ and $T>0$ such that
	\begin{align*}
		\mathbb{P} \left( \tau_{3,\delta} \leq T \right) \geq \exp ((-V - \eta)/ \varepsilon).
	\end{align*}
	for all $\varepsilon \leq \varepsilon_0$. 
	Conditioning on the event $\left\{ \tau_{3,\delta}^\varepsilon >(k-1)T \right\}$ for $k=2,3,...$ yields 
	\begin{align*}
		\mathbb{P} \left( \tau_{3,\delta}^\varepsilon > k T \right)
		&= \mathbb{P} \left( \tau_{3,\delta}^\varepsilon > k T | \tau_{3,\delta}^\varepsilon > (k-1) T  \right)
			\mathbb{P} \left( \tau_{3,\delta}^\varepsilon > (k-1) T \right) \\
		&\leq \mathbb{P} \left( \tau_{3,\delta}^\varepsilon > T  \right) 
			\mathbb{P} \left( \tau_{3,\delta}^\varepsilon > (k-1) T \right)\\
		& \leq \mathbb{P} \left( \tau_{3,\delta}^\varepsilon > T  \right)^k
	\end{align*}
	Therefore,
	\begin{align*}
		\mathbb{E} \tau_{3,\delta}^\varepsilon 
			&\leq T \left( 1 + \sum_{k=1}^\infty \mathbb{P} 
				\left( \tau_{3,\delta}^\varepsilon > k T \right) \right)
			\leq T \left( 1 + \sum_{k=1}^\infty 
				\left( 1 - \exp ((-V - \eta)/ \varepsilon) \right)^k \right) \\
			&\leq T \exp ((V + \eta)/ \varepsilon)
	\end{align*}
	for all $\varepsilon \leq \varepsilon_0$. Using Markov's inequality it follows that
	\begin{align*}
		\mathbb{P} \left( \tau_{3,\delta}^\varepsilon  \geq  \exp ((V + \beta)/ \varepsilon) \right)
		\leq T \exp( - \beta / (2\varepsilon))
	\end{align*}
	for all $\varepsilon \leq \varepsilon_0$.
\end{proof}

\begin{remark}
	Observe that the upper bound for $\tau_{3,\delta}^\varepsilon$
	as in Proposition \ref{upper_bound}
	even hold for some RDS that do not synchronize. \\
	In \cite{Vorkastner2018}, an example of a SDE is presented which does not synchronize for small noise.
	The drift of this SDE is of the same form as in the SDE \eqref{original_SDE} while the 
	noise merely acts in the first component.
	Hence, the arguments in Lemma \ref{LDP_upper_bound} and Proposition \ref{upper_bound} extend
	to this SDE	since $g^\alpha$ in Lemma \ref{LDP_upper_bound} 
	is chosen to be $0$ in all components except for the first one. 
\end{remark}

\subsection{Approaching the set attractor}

Combining the estimates from the previous subsections, we get lower and upper bounds 
for these stopping times. 
These bounds show that the time a set requires to approach the
attractor is roughly $\exp(V(0,1,\infty)/\varepsilon)$. 

\begin{theorem}
	Assume that the SDE \eqref{original_SDE} is strongly contracting.
	Set $V:= V(0,1,\infty)$ and let $S_1 \subset M \subset \mathbb{R}^d$. For any $\beta>0$ there exists 
	$\delta_0>0$ such that for all $0 < \delta \leq \delta_0$ it holds that
	\begin{align*}
		\lim_{\varepsilon \rightarrow 0}  \mathbb{P} 
		\left( \exp ( (V - \beta )/ \varepsilon ) < 
		\tau_{1,2\delta}^\varepsilon \leq \tau_{2,\delta,M}^\varepsilon \leq \tau_{3,\delta}^\varepsilon 
		< \exp ( (V + \beta )/ \varepsilon ) \right) =1
	\end{align*}
	and 
	\begin{align*}
		\lim_{\varepsilon \rightarrow 0} \varepsilon \log \mathbb{E} \tau^\varepsilon_{1,2\delta} =
		\lim_{\varepsilon \rightarrow 0} \varepsilon \log \mathbb{E} \tau_{2,\delta,M}^\varepsilon =
		\lim_{\varepsilon \rightarrow 0} \varepsilon \log \mathbb{E} \tau^\varepsilon_{3,\delta} = V.
	\end{align*}
\end{theorem}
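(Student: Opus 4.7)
The proof is essentially a combination of the two previous sections together with the chain of inequalities $\tau_{1,2\delta}^\varepsilon \leq \tau_{2,\delta,M}^\varepsilon \leq \tau_{3,\delta}^\varepsilon$ that was already observed in Subsection \ref{section_LDP}. So rather than doing fresh analysis, the task is to package the lower bound from Corollary \ref{cor_lower_bound} and the upper bound from Proposition \ref{upper_bound} into a single sandwich statement.

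My plan is as follows. Fix $\beta>0$. First I invoke Corollary \ref{cor_lower_bound}, which yields some $\delta_0>0$ such that for every $0<\delta\leq \delta_0$,
\begin{align*}
\lim_{\varepsilon\to 0}\mathbb{P}\bigl(\tau_{1,2\delta}^\varepsilon > \exp((V-\beta)/\varepsilon)\bigr)=1
\quad\text{and}\quad
\liminf_{\varepsilon\to 0}\varepsilon\log\mathbb{E}\tau_{1,2\delta}^\varepsilon \geq V.
\end{align*}
(Here one applies the corollary with the parameter $2\delta$ in place of $\delta$, shrinking $\delta_0$ by a factor of $2$ if needed.) Next I invoke Proposition \ref{upper_bound}, which gives for the same $\beta$ and any $\delta>0$
\begin{align*}
\lim_{\varepsilon\to 0}\mathbb{P}\bigl(\tau_{3,\delta}^\varepsilon < \exp((V+\beta)/\varepsilon)\bigr)=1
\quad\text{and}\quad
\limsup_{\varepsilon\to 0}\varepsilon\log\mathbb{E}\tau_{3,\delta}^\varepsilon \leq V.
\end{align*}

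Then I combine the two in-probability statements by taking the intersection of the two almost-sure events; since each has probability tending to $1$, so does their intersection. On this intersection the sandwich $\tau_{1,2\delta}^\varepsilon \leq \tau_{2,\delta,M}^\varepsilon \leq \tau_{3,\delta}^\varepsilon$, which holds deterministically for any $M$ with $S_1\subset M\subset\mathbb{R}^d$, yields the displayed probabilistic statement of the theorem. For the expectation statement, monotonicity of expectation applied to the same sandwich gives $\mathbb{E}\tau_{1,2\delta}^\varepsilon \leq \mathbb{E}\tau_{2,\delta,M}^\varepsilon \leq \mathbb{E}\tau_{3,\delta}^\varepsilon$, and since both outer terms satisfy $\varepsilon\log \mathbb{E}(\cdot)\to V$ by the bounds above, so does the middle one by a squeeze argument.

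There is really no substantive obstacle here; the conceptual work has been done in the previous two subsections. The only mild care needed is the factor of $2$ in the parameter when invoking Corollary \ref{cor_lower_bound}, so that the deterministic inequality $\tau_{1,2\delta}^\varepsilon\leq \tau_{2,\delta,M}^\varepsilon$ matches up with the same $\delta$ appearing in $\tau_{2,\delta,M}^\varepsilon$ and $\tau_{3,\delta}^\varepsilon$. Aside from this bookkeeping, the theorem follows by direct combination of the two one-sided bounds.
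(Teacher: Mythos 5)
Your proposal is correct and is essentially identical to the paper's own proof, which likewise just combines Corollary \ref{cor_lower_bound} and Proposition \ref{upper_bound} via the chain $\tau_{1,2\delta}^\varepsilon \leq \tau_{2,\delta,M}^\varepsilon \leq \tau_{3,\delta}^\varepsilon$. Your extra remarks on the factor-of-two rescaling of $\delta_0$ and the squeeze argument for the expectations are exactly the bookkeeping the paper leaves implicit.
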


\begin{proof}
	Using $\tau_{1,2\delta}^\varepsilon \leq \tau_{2,\delta,M}^\varepsilon \leq \tau_{3,\delta}^\varepsilon$,
	the statement follows by Corollary \ref{cor_lower_bound} and Proposition \ref{upper_bound}.
\end{proof}



\section{Time required for a point to approach the attractor}
\label{section_weak_syn}

\subsection{Convergence to a process on the unit sphere}

In this section, we show that the time required for a point to approach the attractor under the 
dynamics of \eqref{original_SDE} in dimension $d=2$ is  exactly of order $\varepsilon^{-1}$. 
In particular, we give an estimate on the rate of convergence of a point under the dynamics of
\eqref{original_SDE} towards the attractor. 
\\
Here, we consider the minimal weak point attractor $A^{X,\varepsilon}_{point}$. 
A minimal weak point attractor is a weak point attractor
that is contained in any other weak point attractor. 
By \cite[Theorem 3.1]{Dimitroff2011} and \cite[Theorem 23]{Crauel2018} 
such a minimal weak point attractor exists.
\\
In dimension $d=1$, the time until two points approach each other is the same as the time until
the diameter of the of the interval between both points to get small. 
Hence, in dimension $d=1$ the time until a point to approach the attractor can be described by methods 
of section \ref{section_strong_syn} and grows exponentially in $\varepsilon^{-1}$. \\
We concentrate on the case of dimension $d=2$ where the process behaves similar to a process on a 
unit sphere which is known to synchronize weakly. 
It remains as an open problem whether one can use similar arguments in higher dimensions as well.
\\
We perform a time change and compare the accelerated process to a process on the unit sphere.
Therefore, we write the accelerated process in polar coordinates. Precisely, we consider
\begin{align*}
	\left( R_t^\varepsilon \cos \phi_t^\varepsilon, 
	R_t^\varepsilon \sin \phi_t^\varepsilon \right) 
	= X_{t/\varepsilon}^\varepsilon.
\end{align*}
Then,
\begin{align}
\label{SDE_radius_square}
	d (R_{t}^\varepsilon)^2 = 
		- \frac{4}{\varepsilon} (R_t^\varepsilon)^2 u'((R_t^\varepsilon)^2) dt 
		+ 2 R_t^\varepsilon \cos \phi_t^\varepsilon d\tilde{W}_t^1 
		+ 2	 R_t^\varepsilon \sin \phi_t^\varepsilon d\tilde{W}_t^2 
		+ 2 dt.
\end{align}
where $u'$ is the first derivative of $u$ and 
\begin{align}
\label{SDE_angle}
	d\phi_t^\varepsilon = \frac{1}{R_t^\varepsilon} \left( - \sin \phi_t^\varepsilon d\tilde{W}_t^1
	+ \cos \phi_t^\varepsilon d\tilde{W}_t^2 \right)
\end{align}
where $\left(\tilde{W}_t^1,\tilde{W}_t^2\right)  = \sqrt{\varepsilon} W_{t/ \varepsilon}$ 
and $(\tilde{W}^1,\tilde{W}^2)$ is a $2$-dimensional Brownian motion.
As $\varepsilon \rightarrow 0$, the drift of $R_t^\varepsilon$ will move the radius close to $1$. Hence, 
we aim to compare $\phi_t^\varepsilon$ to the process
\begin{align}
\label{SDE_limit_cycle}
	dZ_t = - \sin Z_t d\tilde{W}_t^1 + \cos Z_t d\tilde{W}_t^2
\end{align}
on the limit cycle $S = \mathbb{R} / 2 \Pi \mathbb{Z}$.
After we show that $R_t^\varepsilon$ is close to $1$ and $\phi_t^\varepsilon$ is close to $Z_t$, 
we will use that the RDS associated to \eqref{SDE_limit_cycle} is known to synchronize weakly, i.e. every 
point in $S$ converges to a single random point.

\begin{lemma}
	\label{lemma_hitting_of_radius_one}
	Let $0<\alpha<\beta<1$, $T >0$ and $0< r_1 < 1 < r_2 < r_3 < \infty$.
	Then, there exists an $\varepsilon_0 >0$ 
	such that 
	\begin{align*}
		\mathbb{P} \left( r_1 < R_T^\varepsilon < r_2  \right) \geq 1 - \beta
	\end{align*}
	for all $\varepsilon \leq \varepsilon_0$ and any $\mathcal{F}^-$-measurable $X_0^\varepsilon$ satisfying
	\begin{align*}
		\mathbb{P} \left ( R_0^\varepsilon \leq r_3 \right) \geq 1 - \alpha.
	\end{align*}.
\end{lemma}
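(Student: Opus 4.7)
The plan is to exploit that the drift in \eqref{SDE_radius_square} provides a strong restoring force on the squared radius $S_t := (R_t^\varepsilon)^2$ toward $1$ as $\varepsilon \to 0$. By L\'evy's characterization applied to the martingale part of \eqref{SDE_radius_square}, $S_t$ satisfies the scalar SDE
\begin{align*}
dS_t = \left[- \frac{4}{\varepsilon} S_t u'(S_t) + 2\right] dt + 2 \sqrt{S_t}\, d\hat{B}_t
\end{align*}
for some standard Brownian motion $\hat{B}$. Convexity of $u$ together with $u'(1) = 0$ gives $(s-1)\, u'(s) \geq 0$ for all $s \geq 0$, with strict inequality for $s \neq 1$, so the first drift term has size $\varepsilon^{-1}$ and always points toward $1$. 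Since $X_0^\varepsilon$ is $\mathcal{F}^-$-measurable and the Brownian increments driving the process on $[0,T]$ are independent of it, I would first condition on $R_0^\varepsilon$; the event $\{R_0^\varepsilon > r_3\}$ contributes at most $\alpha$ to the failure probability, so because $\alpha < \beta$ it suffices to show $\sup_{\rho \in [0, r_3]} \mathbb{P}(R_T^\varepsilon \notin (r_1, r_2) \mid R_0^\varepsilon = \rho) \to 0$ as $\varepsilon \to 0$.

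For this conditional estimate I would use a Lyapunov argument. Take a smooth, bounded, non-negative $V : [0, \infty) \to [0, \infty)$ with $V(1) = 0$, $V(s) \geq c_0 > 0$ on $[0, r_1^2] \cup [r_2^2, \infty)$, and $V'(s)(s-1) \geq 0$ everywhere; a smoothed bounded variant of $(s-1)^2$ is a natural choice. A direct computation of the generator of $S_t$, using the convexity of $u$ to absorb the It\^o correction and the lower-order drift, then yields a Foster--Lyapunov type bound on the bounded range of interest,
\begin{align*}
\mathcal{L} V(s) \leq - \frac{c}{\varepsilon} V(s) + C,
\end{align*}
for constants $c, C > 0$ independent of $\varepsilon$. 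Gr\"onwall's inequality then gives $\mathbb{E}[V(S_T) \mid S_0 = \rho^2] \leq V(\rho^2) e^{-cT/\varepsilon} + C\varepsilon/c$, which tends to $0$ as $\varepsilon \to 0$ uniformly over $\rho \in [0, r_3]$. Markov's inequality, using $V \geq c_0$ off $(r_1^2, r_2^2)$, concludes.

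The main obstacle is designing $V$ so that the Foster--Lyapunov inequality holds uniformly on the relevant range of $s$. Near $s = 0$ both the diffusion coefficient $2\sqrt{s}$ and the restoring drift $-4 s u'(s)/\varepsilon$ degenerate, so the push away from zero must come from the It\^o correction $+2\,dt$, and $V$ has to be chosen compatibly with this; for large $s$ one additionally needs an a priori control on $\sup_{t \leq T} S_t$, obtained from the very strong negative drift above $s=1$ via a separate comparison or first-exit argument.
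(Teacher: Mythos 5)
Your overall reduction (condition on $\{R_0^\varepsilon\le r_3\}$, then prove a uniform conditional estimate) is fine, but the central step fails: the Foster--Lyapunov inequality $\mathcal{L}V(s)\le -\tfrac{c}{\varepsilon}V(s)+C$ cannot hold on a neighborhood of $s=0$ for any $V$ of the kind you describe. The generator of $S_t=(R_t^\varepsilon)^2$ is
\begin{align*}
\mathcal{L}V(s)=\Bigl(-\tfrac{4}{\varepsilon}\,s\,u'(s)+2\Bigr)V'(s)+2sV''(s),
\end{align*}
and the only $\varepsilon^{-1}$-term, $-\tfrac{4}{\varepsilon}su'(s)$, vanishes linearly at $s=0$ (as does the diffusion coefficient), so $\mathcal{L}V(0)=2V'(0)$ is of order one, independent of $\varepsilon$. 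Since you require $V(0)\ge c_0>0$, the right-hand side $-\tfrac{c}{\varepsilon}V(0)+C$ tends to $-\infty$ as $\varepsilon\to 0$ while the left-hand side stays bounded; the inequality is therefore false for all small $\varepsilon$ on $\{s\lesssim\varepsilon\}$. You flag this as ``the main obstacle'' but offer no resolution, and none exists within this framework: escape from a neighborhood of $0$ is not a mean-reversion effect with rate $\varepsilon^{-1}$ but an order-one probabilistic effect of the Bessel-type term $+2\,dt$ (note that the natural singular choice $V(s)=-\log s$ makes the Bessel terms cancel exactly, leaving $\mathcal{L}V=4u'(s)/\varepsilon$, which is bounded below by $-4|u'(0)|/\varepsilon$ while $-\tfrac{c}{\varepsilon}V(s)\to-\infty$ as $s\to0$, so geometric drift still fails). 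Without control of the region near $0$, Gr\"onwall and Markov give you nothing about $\mathbb{P}(S_T\le r_1^2)$.

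The paper circumvents exactly this point by a different, three-stage argument: first it shows that on each short interval of length $t$ the martingale part of \eqref{SDE_radius_square} is nonnegative with probability at least $1/2$, in which case the drift inequality forces $(R_s^\varepsilon)^2\ge 2t$ for some $s\le t$ (the $\varepsilon^{-1}$-drift is nonnegative below the unit sphere, so only the $+2\,dt$ term is used); iterating over $k$ intervals makes the probability of never leaving a neighborhood of $0$ before time $T/2$ as small as desired. Once the radius is bounded away from $0$, Lemma \ref{bound_on_stopping_time} (an LDP upper bound on the entry time into an annulus around $S_1$) and Proposition \ref{prop_lower_bound} (an LDP lower bound on the exit time from $D_{r_1,r_2}$) finish the proof. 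If you want to keep a Lyapunov flavor, you would still need this escape-from-zero step as a separate probabilistic argument, plus a localization so that the drift condition is only invoked on $\{S_t\ge\kappa\}$; at that point you have essentially reconstructed the paper's structure.
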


\begin{proof}
	Choose $k \in \mathbb{N}$ such that $2^{-k+1} \leq \beta - \alpha$ and set
	$t = \min \left\{1/2, T / (2k) \right\}$.
	Using \eqref{SDE_radius_square}, 
	\begin{align*}
		 \int_0^{t} R_t^\varepsilon \cos \phi_t^\varepsilon d\tilde{W}_t^1 
		+ 	\int_0^{t} R_t^\varepsilon \sin \phi_t^\varepsilon d\tilde{W}_t^2 >0
	\end{align*}
	implies that $(R_s^\varepsilon)^2 \geq 2 t$ for some $s \leq t$. 
	Set $r_4 = \sqrt{2t} \leq 1$. Then
	\begin{align*}
		\mathbb{P} \left( R_s^\varepsilon \geq r_4 \textrm{ for some } s \leq t \right) \geq 1/2.
	\end{align*}
	Conditioning on the event $\left\{ R_s^\varepsilon < r_3 \textrm{ for all } s \leq (j-1)t \right\}$
	for $j=2,3,...k$ yields to
	\begin{align}
			\begin{split}
				\mathbb{P} \left( R_s^\varepsilon < r_4 \textrm{ for all } s \leq T /2 \right)
				&\leq \mathbb{P} \left( R_s^\varepsilon < r_4 \textrm{ for all } s \leq kt \right) \\
				&\leq 1/2 \; \mathbb{P} \left( R_s^\varepsilon < r_4 \textrm{ for all } s \leq (k-1)t \right) \\
				&\leq 2^{-k} \leq (\beta - \alpha)/2.
			\end{split}
	\end{align}
	Combining this estimate and the assumption,
	it follows that
	\begin{align*}
		\mathbb{P} \left( r_4 \leq R_s^\varepsilon \leq r_3 \textrm{ for some } s \leq T/2 \right) 
			\geq 1 - (\alpha + \beta)/2.
	\end{align*}
	Let $r_1<r_5<1<r_6<r_2$.
	By Lemma \ref{bound_on_stopping_time}, there exists $C, \varepsilon_1>0$ such that
	\begin{align*}
		\mathbb{P} \left( r_5 < R_s^\varepsilon < r_6 \textrm{ for some } s 
			\leq T/2 +  \varepsilon C \right) 
			\geq 1 - (\alpha + 2\beta)/3.
	\end{align*}
	for all $\varepsilon \leq \varepsilon_1$. Hence, 
	for all $\varepsilon \leq \min \left\{ \varepsilon_1 , T/(2C) \right\}$
	\begin{align*}
		\mathbb{P} \left( r_5 < R_t^\varepsilon < r_6 \textrm{ for some } t \leq T \right) 
		\geq 1 - (\alpha + 2\beta)/3..
	\end{align*}
	Using Proposition \ref{prop_lower_bound}, the statement follows.
\end{proof}

\begin{lemma}
	\label{lemma_convergence_to_limit_cycle}
	Let $0<\alpha < \beta <1$ and $\delta,T>0$. Then, there exists $\varepsilon_0, \eta >0$ such that
	\begin{align*}
		\mathbb{P} \left( \max_{t \leq T} \left| R_t^\varepsilon -1 \right| < \delta \textrm{ and } 
			\max_{t \leq T} \left| \phi_t^\varepsilon - Z_t \right| < \delta \right) \geq 1 - \beta
	\end{align*}
	 for all $\varepsilon\leq \varepsilon_0$ and all $\mathcal{F}^-$-measurable 
	$X_0^\varepsilon$ and $Z_0$ satisfying
	\begin{align*}
		\mathbb{P} \left( \left| R_0^\varepsilon -1 \right| < \eta \textrm{ and } 
			\left| \phi_0^\varepsilon - Z_0 \right| < \eta \right) \geq 1 - \alpha.
	\end{align*}
\end{lemma}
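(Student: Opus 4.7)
The plan is to establish the two inequalities of the lemma sequentially. I fix an auxiliary $0 < \delta' < \delta$ to be chosen later; first I show that $R^\varepsilon$ stays within $\delta'$ of $1$ throughout $[0,T]$ with high probability, and then on that event I control $|\phi^\varepsilon - Z|$ by a Grönwall--BDG argument whose error scales with $\delta'$ and the initial discrepancy $\eta$. The constants are chosen in the order: $\delta'$ first (based on $\delta, T, \beta$), then $\eta \leq \delta'$, and finally $\varepsilon_0$.

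For the radial step, let $\kappa^\varepsilon := \inf\{t \geq 0 : |R^\varepsilon_t - 1| \geq \delta'\}$. In original unaccelerated time this is the exit time of $X^\varepsilon$ from the annulus $D_{1-\delta', 1+\delta'}$. For $\eta < \delta'$, the potential cost
\[
V_0 := \min\bigl\{V(1-\delta', 1-\eta, 1+\delta'),\ V(1-\delta', 1+\eta, 1+\delta')\bigr\}
\]
is strictly positive because $u$ has a strict minimum at $1$. Proposition \ref{prop_lower_bound} (extended from a single sphere to the annulus $D_{1-\eta, 1+\eta}$ by the rotational symmetry of the SDE, which makes the law of $|X^\varepsilon_\cdot|$ depend only on $|X^\varepsilon_0|$) then gives that the original-time exit time exceeds $\exp((V_0 - \beta_0)/\varepsilon)$ with probability tending to $1$, for any fixed $\beta_0 \in (0, V_0)$. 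Since accelerated time $T$ corresponds to original time $T/\varepsilon \ll \exp(V_0/(2\varepsilon))$ for small $\varepsilon$, we conclude $\mathbb{P}(\kappa^\varepsilon \leq T) \to 0$ as $\varepsilon \to 0$.

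For the angular step, set $D_t := \phi^\varepsilon_t - Z_t$. Subtracting \eqref{SDE_limit_cycle} from \eqref{SDE_angle} and noting the absence of drift yields the (local) martingale
\[
dD_t = \bigl(\sin Z_t - \tfrac{1}{R^\varepsilon_t}\sin\phi^\varepsilon_t\bigr)\,d\tilde W^1_t + \bigl(\tfrac{1}{R^\varepsilon_t}\cos\phi^\varepsilon_t - \cos Z_t\bigr)\,d\tilde W^2_t.
\]
A direct calculation gives $d\langle D\rangle_t/dt = (1/R^\varepsilon_t - \cos D_t)^2 + \sin^2 D_t$, which via $1/R - \cos D = (1-R)/R + 2\sin^2(D/2)$ is bounded by $C(D_t^2 + (R^\varepsilon_t - 1)^2)$ whenever $|D_t|$ and $|R^\varepsilon_t - 1|$ are bounded. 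On $\{t \leq \kappa^\varepsilon\}$ this is at most $C(D_t^2 + (\delta')^2)$. Applying Itô's formula to $D_{t \wedge \kappa^\varepsilon}^2$, taking expectations, Grönwall, and Doob's $L^2$-inequality yield $\mathbb{E}[\sup_{t \leq T \wedge \kappa^\varepsilon} D_t^2] \leq C_T(\eta^2 + (\delta')^2)$. Markov's inequality combined with the radial step then yields the result after choosing $\delta'$ and $\eta$ small enough that this bound is $< \delta^2 \beta / 4$.

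The delicate point is the radial concentration on the growing horizon $T/\varepsilon$: a local Lyapunov or OU bound would only control a fixed time window, and the exponentially long exit-time estimate from Proposition \ref{prop_lower_bound} is precisely what makes the argument succeed on the accelerated scale. A secondary technical issue is that Proposition \ref{prop_lower_bound} is stated for starts on a single sphere, but rotational invariance reduces a start from the annulus to a start from each sphere of the relevant radius.
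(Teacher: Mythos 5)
Your proposal is correct and follows essentially the same route as the paper: first the radial concentration on $[0,T]$ in accelerated time via the exponential exit-time bound of Proposition \ref{prop_lower_bound} (the paper likewise invokes this proposition, localizing with the event $B_t^\varepsilon$ rather than a stopping time), then a martingale/Gr\"onwall estimate on $\phi^\varepsilon-Z$ whose quadratic variation is bounded by a constant times $|\phi^\varepsilon-Z|^2+|1-1/R^\varepsilon|^2$, finished with Doob and Markov. The only differences are cosmetic (It\^o's formula on the stopped $D^2$ versus Doob plus It\^o isometry on the indicator-localized maximum).
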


\begin{proof}
	Choose $0< \eta < 0.5 \min \left\{ \delta, 1 \right\} $ such that 
	$\left( 4 \eta^2 + 128 T \eta^2(1 - 2\eta)^{-2} \right) e^{16T} < (\beta - \alpha) \delta^2 $.
	Define
	\begin{align*}
		B_t^\varepsilon := \left\{ \, \max_{s \leq t} \left| R_s^\varepsilon -1 \right| < 2 \eta \right\}
			\cap \left\{ \, \left| \varphi_0^\varepsilon - Z_0 \right| < \eta \right\}.
	\end{align*}
	for all $t\leq T$. Using Proposition \ref{prop_lower_bound} and the assumption, 
	there exists $\varepsilon_0>0$
	such that 
	\begin{align*}
		\mathbb{P} \left( B_T^\varepsilon \right) \geq 
			1 - (\alpha + \beta)/2
	\end{align*}
	for all $\varepsilon \leq \varepsilon_0$.
	We use Doob's inequality and Ito isometry to estimate
	\begin{align*}
		\mathbb{E} \max_{t \leq T} \left| \phi_{t}^\varepsilon - Z_{t} \right|^2 \mathbbm{1}_{B_{T}^\varepsilon}
			&\leq 2 \mathbb{E} \left| \phi_0^\varepsilon - Z_0 \right|^2 \mathbbm{1}_{B_T^\varepsilon}
			+2\mathbb{E} \max_{t\leq T} \bigg(  \int_{0}^{t} 
			\left( \sin Z_s - \frac{1}{R_s^\varepsilon} \sin \phi_s^\varepsilon \right)
			\mathbbm{1}_{B_s^\varepsilon} \, dW_s^1 \\
			& \qquad + \int_{0}^{t} 
			\left( -\cos Z_s + \frac{1}{R_s^\varepsilon} \cos \phi_s^\varepsilon \right)
			\mathbbm{1}_{B_s^\varepsilon} \, dW_s^2 \bigg)^2 \\
		& \leq 2 \eta^2 +4 \mathbb{E} \int_{0}^{T} 
			\left( \left( \sin Z_t - \frac{1}{R_t^\varepsilon} \sin \phi_t^\varepsilon \right)^2
			+ \left( -\cos Z_t + \frac{1}{R_t^\varepsilon} \cos \phi_t^\varepsilon \right)^2 \right)
			\mathbbm{1}_{B_t^\varepsilon} \, dt\\
		& \leq 2 \eta^2 + 16 \,\mathbb{E} \int_{0}^{T} 
			\left( \left| Z_t - \phi_t^\varepsilon \right|^2 + 
			\left| 1 - \frac{1}{R_t^\varepsilon} \right|^2 \right)
			\mathbbm{1}_{B_t^\varepsilon} \, dt \\
		& \leq 2 \eta^2 + 64T \frac{\eta^2}{(1 - 2\eta)^2} + 16 \int_{0}^{T} 
			\mathbb{E} \max_{s \leq t}  \left| Z_s - \phi_s^\varepsilon \right|^2 \mathbbm{1}_{B_t^\varepsilon} \, dt.
	\end{align*}
	Using Gronwall's inequality, it follows that
	\begin{align*}
		\mathbb{E} \max_{t \leq T} \left| \phi_{t}^\varepsilon - Z_{t} \right|^2 \mathbbm{1}_{B_{T}^\varepsilon} 
		\leq \left( 2 \eta^2 + 64T \frac{\eta^2}{(1 - 2\eta)^2} \right) \, e^{16T} < ( \beta - \alpha)
		 \delta^2 /2.
	\end{align*}
	Using Markov inequality, we get
	\begin{align*}
		\mathbb{P} \left( \max_{t \leq T}\left| R_{t}^\varepsilon -1 \right| < \delta \textrm{ and }
			 \max_{t\leq T}\left| \phi_{t}^\varepsilon - Z_{t} \right| < \delta \right)
		&  \geq \mathbb{P}\left( B_{T}^\varepsilon \right) - 
			\mathbb{P} \left( B_{T}^\varepsilon \textrm{ and }
			\max_{t \leq T }\left| \phi_{t}^\varepsilon - Z_{t} \right| \geq \delta \right) \\
		&  \geq 1 - (\alpha + \beta)/2 - \delta^{-2} \, 
			\mathbb{E} \max_{t \leq T} \left| \phi_{t}^\varepsilon - Z_{t} \right|^2 \mathbbm{1}_{B_{T}^\varepsilon}\\
		&  \geq 1-\alpha	 
	\end{align*}
	for all $\varepsilon \leq \varepsilon_0$.
\end{proof}

\subsection{Asymptotic stability of the process on the unit sphere}
The SDE \eqref{SDE_limit_cycle} has a stable point whose Lyapunov exponent is negative, 
see \cite{Baxendale1986}. This random point is the minimal weak point attractor of the RDS
associated to \eqref{SDE_limit_cycle} which we in the following denote by $A^Z$. 
Observe that due to the time change the minimal weak point attractor $A^Z$ of the RDS associated to 
\eqref{SDE_limit_cycle} at time $t$ is $A^Z( \theta_{t/\varepsilon} \omega)$.
When we consider the distance of $A^Z$ to a point in $\mathbb{R}^2$,
we identify with $A^Z$ the point $ \left( \cos A^Z, \sin A^Z \right)$ on the unit sphere.\\
Denote by $Z_t (Z_0)$ the solution of \eqref{SDE_limit_cycle} started in $Z_0$.
We now show the rate of convergence of $Z_t(Z_0)$ to $A^Z$, first for deterministic $Z_0$ and then for
$\mathcal{F}^-$-measurable $Z_0$.

\begin{lemma}
	\label{convergence_lc_to_attractor_determ_start}
	For any $\alpha>0$ and $0 < \mu <1/2$ there exists $C>0$ such that 
	\begin{align*}
		\mathbb{P} \left( \left| Z_t(Z_0) - A^Z (\theta_{t/ \varepsilon} \cdot) \right| \leq C \, e^{- \mu t}
		\textrm{ for all } t \geq 0 \right) \geq 1- \alpha
	\end{align*}
	for all $Z_0 \in [0, 2 \Pi)$. 
\end{lemma}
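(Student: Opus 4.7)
The plan is to combine the cocycle invariance of $A^Z$ with an It\^o computation for the chordal distance of two coupled solutions, and to close with an exponential supermartingale inequality.

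First, since $A^Z$ is a singleton weak point attractor invariant under the cocycle $Z$, one has $Z_t(\omega, A^Z(\omega)) = A^Z(\theta_{t/\varepsilon}\omega)$ almost surely. Writing $Y_t := Z_t(\omega,Z_0)$, $Y_t' := Z_t(\omega,A^Z(\omega))$ and using $|e^{ia}-e^{ib}|=2|\sin((a-b)/2)|$, the quantity to estimate is the chordal distance
\[
\rho_t \;:=\; |Z_t(Z_0) - A^Z(\theta_{t/\varepsilon}\cdot)| \;=\; 2\bigl|\sin\bigl((Y_t - Y_t')/2\bigr)\bigr|
\]
between the two solutions coupled by the common Brownian motion. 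Let $D_t$ denote the angular difference lifted to $\mathbb{R}$ and set $\bar Y_t := (Y_t+Y_t')/2$. Subtracting the two copies of \eqref{SDE_limit_cycle} and applying the identities $\sin Y - \sin Y' = 2\cos\bar Y\,\sin(D/2)$ and $\cos Y - \cos Y' = -2\sin\bar Y\,\sin(D/2)$ reduces the difference SDE to
\[
dD_t \;=\; 2\sin(D_t/2)\,dB_t,
\]
where $B$ is a standard Brownian motion by L\'evy's characterisation (its quadratic variation is $\int_0^t(\cos^2\bar Y_s+\sin^2\bar Y_s)\,ds=t$). The zero set $\{\rho_t=0\}$ is absorbing since the diffusion coefficient vanishes there.

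Applying It\^o to $f(D):=\log|\sin(D/2)|$, with $f'(D)=\tfrac12\cot(D/2)$, $f''(D)=-\tfrac14\csc^2(D/2)$ and $d[D]_t=4\sin^2(D_t/2)\,dt$, the $\cot\cdot\sin$ and $\csc^2\cdot\sin^2$ factors cancel cleanly and yield
\[
d\log\rho_t \;=\; -\tfrac12\,dt \;+\; \cos(D_t/2)\,dB_t \qquad \text{on }\{\rho_t>0\}.
\]
Let $\tau:=\inf\{s:\rho_s=0\}$ and $M_t:=\int_0^{t\wedge\tau}\cos(D_s/2)\,dB_s$; then $[M]_t\le t$ uniformly in $Z_0$. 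With $\eta:=\tfrac12-\mu>0$ the process $\exp(2\eta M_t - 2\eta^2[M]_t)$ is a nonnegative supermartingale starting at $1$, so Doob's maximal inequality combined with $[M]_t\le t$ gives
\[
\mathbb{P}\bigl(\sup_{t\ge 0}(M_t-\eta t)\ge K\bigr)\;\le\;e^{-2\eta K}
\]
for every $K>0$.

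Choose $K:=\log(1/\alpha)/(1-2\mu)$, so the right-hand side equals $\alpha$. On the complementary event, for $t<\tau$ one obtains $\log\rho_t\le\log\rho_0-\mu t+K\le\log 2-\mu t+K$, hence $\rho_t\le 2e^K e^{-\mu t}$; for $t\ge\tau$ the estimate is trivial. Setting $C:=2e^K=2\alpha^{-1/(1-2\mu)}$, which is independent of $Z_0$ since only the uniform bounds $\rho_0\le 2$ and $[M]_t\le t$ were used, completes the proof. I expect the principal technical point to be extending the It\^o identity, valid only on $\{\rho_t>0\}$, to a global statement despite the singularity of $f$ at $\{\rho=0\}$; this is handled by stopping at $\tau$ and using that $\{\rho=0\}$ is absorbing. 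The antipodal locus $\{\cos(D_t/2)=0\}$ causes no genuine difficulty, as $f$ is smooth there.
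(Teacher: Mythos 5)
Your proof is correct, but it takes a genuinely different route from the paper. The paper argues softly: it quotes Baxendale for the top Lyapunov exponent $-1/2$, invokes the stable manifold theorem to get a random constant $c(\omega)$ and a random neighborhood $U(\omega)$ of $A^Z(\omega)$ on which exponential convergence at rate $\mu$ holds, chooses $\tilde c,\delta$ to make this uniform off an event of probability $\alpha/2$, and then uses attraction to $A^Z$ to land any deterministic $Z_0$ in the $\delta$-neighborhood by some fixed time $T$ with probability $1-\alpha/2$; the constant $C$ is then non-explicit. You instead exploit the special isotropic structure of \eqref{SDE_limit_cycle}: the two-point motion reduces exactly to $dD_t = 2\sin(D_t/2)\,dB_t$, Itô on $\log|\sin(D/2)|$ gives the pathwise identity $\log\rho_t = \log\rho_0 - t/2 + M_t$ with $[M]_t\le t$, and the exponential supermartingale inequality yields the global bound with the explicit constant $C = 2\alpha^{-1/(1-2\mu)}$, uniformly in $Z_0$, with no stable-manifold input and with the Lyapunov exponent $-1/2$ recovered as a by-product. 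Your computations check out (the prosthaphaeresis identities, the Lévy characterisation of $B$, the cancellation in Itô's formula, the absorbing and inaccessible nature of $\{D\in 2\pi\mathbb{Z}\}$, and the choice of $K$). The trade-off is that your argument is quantitative and self-contained but tied to the specific form of \eqref{SDE_limit_cycle}, whereas the paper's argument would survive for any circle diffusion with negative top Lyapunov exponent and a singleton point attractor. One cosmetic point: the paper measures $|Z_t(Z_0)-A^Z|$ after embedding both points into $\mathbb{R}^2$ via $z\mapsto(\cos z,\sin z)$, which is exactly your chordal distance $2|\sin(D/2)|$, so no constant is lost; if one instead read the distance as the angular distance on $S$, your bound would still hold after multiplying $C$ by $\pi/2$.
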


\begin{proof}
	By \cite{Baxendale1986}, the top Lyapunov exponent of \eqref{SDE_limit_cycle} is $-1/2$. 
	Stable manifold theorem implies
	that for all $0 < \mu <0.5$ there exist a measurable $c (\omega)>0$  and a measurable neighborhood 
	$U(\omega)$ of $A^Z (\omega)$ such that
	\begin{align*}
		\left| Z_t(x) - A^Z (\theta_{t/ \varepsilon} \omega) \right| <  c(\omega) e^{-\mu t}
	\end{align*}
	for all $x \in U(\omega)$ and $t\geq 0$. Hence, for any $\alpha >0$ there exists some $\tilde{c}, \delta >0$
	such that 
	\begin{align*}
		\mathbb{P} \left( \left| Z_t(x) - A^Z (\theta_{t/ \varepsilon} \omega) \right| < \tilde{c} e^{-\mu t} 
		\textrm{ for all } x \in A^Z (\omega)^\delta \textrm{ and } t \geq 0 \right) \geq 1 - \alpha/2.
	\end{align*}
	Since $A^Z (\omega)$ is the attractor of the RDS associated to \eqref{SDE_limit_cycle}, there exists
	a time $T>0$ such that 
	\begin{align*}
		\mathbb{P} \left( \left| Z_T(x) - A^Z (\theta_{T/ \varepsilon} \omega) \right| 
		< \delta \right) \geq 1 - \alpha/2
	\end{align*}
	for all $x \in [0, 2 \Pi)$. Combining these two estimates yields to
	\begin{align*}
		\mathbb{P} \left( \left| Z_t(x) - A^Z (\theta_{t/ \varepsilon} \omega) \right| < \tilde{c} e^{-\mu (t-T)} 
			\textrm{ for all } t \geq T 
		\right) \geq 1 - \alpha
	\end{align*}
	for all $x \in [0, 2 \Pi)$ and $t\geq 0$.
\end{proof}

\begin{proposition}
	\label{convergence_lc_to_attractor_random_start}
	For any $\alpha>0$ and $0 < \mu <0.5$ there exists $C>0$ such that 
	\begin{align*}
		\mathbb{P} \left( \left| Z_t - A^Z (\theta_{t/ \varepsilon} \cdot) \right| \leq C \, e^{- \mu t}
		\textrm{ for all } t \geq 0 \right) \geq 1- \alpha
	\end{align*}
	for all $\mathcal{F^-}$-measurable $Z_0$. 
\end{proposition}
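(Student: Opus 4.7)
\emph{Proof plan.} The approach is to replay the two-step structure of Lemma \ref{convergence_lc_to_attractor_determ_start}'s proof and upgrade the single place where the deterministic starting point $Z_0 = x$ was used so that it accommodates an $\mathcal{F}^-$-measurable $Z_0$.

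The first ingredient of Lemma \ref{convergence_lc_to_attractor_determ_start}, furnished by the stable-manifold theorem, produces deterministic constants $\tilde c, \delta > 0$ and a single event of probability at least $1 - \alpha/2$ on which
\begin{align*}
|Z_t(x) - A^Z(\theta_{t/\varepsilon}\omega)| < \tilde c \, e^{-\mu t} \text{ for every } x \in A^Z(\omega)^\delta \text{ and every } t \geq 0.
\end{align*}
This event is uniform in the starting point, so it transfers to an $\mathcal{F}^-$-measurable initial value without any modification.

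The second ingredient of Lemma \ref{convergence_lc_to_attractor_determ_start} is the entry estimate saying that for some large $T$ and every deterministic $x \in [0, 2\pi)$ one has $\mathbb{P}(|Z_T(x) - A^Z(\theta_{T/\varepsilon}\omega)| < \delta) \geq 1 - \alpha/2$. I would upgrade this to the same estimate with $Z_0$ in place of $x$. Using the cocycle invariance $A^Z(\theta_{T/\varepsilon}\omega) = Z_T(\omega, A^Z(\omega))$ together with the $\mathcal{F}^-$-measurability of both $Z_0$ and $A^Z$, conditioning on $\mathcal{F}^-$ turns the target event into $\{|Z_T(z, \omega^+) - Z_T(a, \omega^+)| < \delta\}$ with deterministic values $z = Z_0(\omega^-)$, $a = A^Z(\omega^-)$ and with forward Brownian increments $\omega^+$ independent of $\mathcal{F}^-$. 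A triangle-inequality splitting
\begin{align*}
|Z_T(z) - Z_T(a)| \leq |Z_T(z) - A^Z(\theta_{T/\varepsilon}\omega)| + |Z_T(a) - A^Z(\theta_{T/\varepsilon}\omega)|,
\end{align*}
combined with two applications of the deterministic-start entry estimate at threshold $\delta/2$ and tolerance $\alpha/4$, then yields $\mathbb{P}(|Z_T(z) - Z_T(a)| < \delta) \geq 1 - \alpha/2$ uniformly in $(z,a) \in [0,2\pi)^2$. Integrating against the joint law of $(Z_0, A^Z)$ gives the upgraded entry estimate.

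Intersecting the two events produced in the two steps as in Lemma \ref{convergence_lc_to_attractor_determ_start} shows that with probability at least $1 - \alpha$ one has $|Z_t(Z_0) - A^Z(\theta_{t/\varepsilon}\omega)| \leq \tilde c \, e^{-\mu(t-T)}$ for every $t \geq T$. Enlarging the constant to $C := \max\{2, \tilde c\}\,e^{\mu T}$ absorbs the bounded interval $[0, T]$ (since the distance on the unit circle is bounded by $2$) and delivers the stated global estimate. The main obstacle is establishing the entry estimate uniformly in $(z, a)$; everything else is a routine conditioning argument exploiting the independence of the forward Brownian increments from $\mathcal{F}^-$ and the cocycle invariance of the minimal weak point attractor.
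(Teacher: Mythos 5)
Your proposal is correct, but it follows a genuinely different route from the paper. The paper never conditions on $\mathcal{F}^-$: instead it introduces the time-reversed unstable point $U^Z$, which is $\mathcal{F}^+$-measurable and hence independent of the pair $(Z_0, A^Z)$, partitions the circle into $n$ arcs, and on the event that $U^Z$ lies in an arc containing neither $Z_0$ nor $A^Z$ it sandwiches the trajectory of $Z_0$ between the trajectories of the two (deterministic) arc endpoints, to which Lemma \ref{convergence_lc_to_attractor_determ_start} applies directly; this exploits the order-preserving structure of the one-dimensional flow on the circle minus the unstable point. You instead freeze $(Z_0, A^Z)$ by conditioning on $\mathcal{F}^-$, use the strict invariance $A^Z(\theta_{T/\varepsilon}\omega) = Z_T(\omega, A^Z(\omega))$ to rewrite the entry event as a two-deterministic-point event driven only by the forward increments, and bound it via the triangle inequality through the pivot $A^Z(\theta_{T/\varepsilon}\omega)$, applying the deterministic entry estimate twice. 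Your argument buys generality and simplicity: it makes no use of the circle's order structure and would extend to higher-dimensional phase spaces, provided the deterministic entry estimate is uniform in the starting point (which here follows from the rotational equivariance of \eqref{SDE_limit_cycle} and is what the paper's Lemma \ref{convergence_lc_to_attractor_determ_start} asserts); the paper's argument, while specific to dimension one, makes explicit that the genuine obstruction is the $\mathcal{F}^+$-measurable unstable point and thus clarifies why $\mathcal{F}^-$-measurability of $Z_0$ is the right hypothesis. In writing your argument out you should state the freezing step carefully (joint measurability of $(z,a,\omega)\mapsto \mathbbm{1}\{|Z_T(z,\omega)-Z_T(a,\omega)|<\delta\}$, which holds by continuity of the flow, plus independence of $\mathcal{F}^-$ and $\mathcal{F}^+$), and note that the final intersection requires the stable-manifold event shifted by $\theta_{T/\varepsilon}$, whose probability is preserved by $\theta$-invariance of $\mathbb{P}$ — both are routine but worth making explicit.
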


\begin{proof}
	The weak point attractor $A^Z (\omega)$ is an $\mathcal{F^-}$-measurable stable point. Reverting the time,
	one receives an $\mathcal{F^+}$-measurable unstable point $U^Z (\omega)$.
	Hence, $U^Z (\omega)$ and $A^Z (\omega)$ are independent.
	Under the dynamics of \eqref{SDE_limit_cycle} every single deterministic point converges to the attractor.
	However, the unstable point does not converge to the attractor. \\
	If the unstable point is in an interval and the attractor is not, then the time the 
	endpoints of this interval require to approach the attractor is an upper bound for the time any point
	outside the interval requires to approach the attractor.\\
	Let $n \in \mathbb{N}$ such that $ \alpha n \geq 4$. 
	We define
	\begin{align*}
		I_k := \left[ k \frac{2 \Pi}{n}, (k+1) \frac{2 \Pi}{n} \right), \qquad
		P_k= k \frac{2 \Pi}{n} \quad \textrm{ and } \quad P_n=P_0
	\end{align*}
	for $0\leq k < n$. 
	By Lemma \ref{convergence_lc_to_attractor_determ_start} there exists $C>0$ such that
	\begin{align*}
		\mathbb{P} \left( \left| Z_t(P_k) - A^Z (\theta_{t/ \varepsilon} \cdot) \right| > C \, e^{- \mu t}
		\textrm{ for some } t \geq 0 \right) 
		\leq \frac{\alpha}{4n}
	\end{align*}
	for all $0 \leq k \leq n$. If $U^Z(\omega) \in I_k$ and $A(\omega) \not\in I_k$ for some $0 \leq k <n$, then
	\begin{align*}
		\sup_{z \not\in I_k} \left| Z_t(z) - A^Z(\theta_{t/ \varepsilon} \omega ) \right| 
		= \min \left\{ \left| Z_t(P_k) - A^Z(\theta_{t/ \varepsilon} \omega ) \right| , 
		 \left| Z_t(P_{k+1}) - A^Z(\theta_{t/ \varepsilon} \omega ) \right| \right\}
	\end{align*}		
	for all $t \geq 0$. Therefore,
	\begin{align*}
		&\mathbb{P} \left( \, \left| Z_t (Z_0) - A^Z (\theta_{t/ \varepsilon} \cdot) \right| 
			\leq C \, e^{- \mu t}
			\textrm{ for all } t \geq 0 \right) \\
		& \qquad \geq \sum_{k=0}^{n-1} \mathbb{P} 
			\big( U^Z(\cdot) \in I_k, A^Z(\cdot) \not\in  I_k, Z_0 \not\in I_k,
			\left| Z_t (P_k) - A^Z (\theta_{t/ \varepsilon} \cdot) \right| \leq C \, e^{- \mu t} 
			\\ & \qquad \qquad \quad \textrm{ and }
			\left| Z_t (P_{k+1}) - A^Z (\theta_{t/ \varepsilon} \cdot) \right| \leq C \, e^{- \mu t}
			\textrm{ for all } t \geq 0 \big)  \\
		& \qquad \geq \sum_{k=0}^{n-1} \big(  
			\mathbb{P} \left(  U^Z(\cdot) \in I_k \right) 
			\mathbb{P} \left( A^Z(\cdot) \not\in  I_k, Z_0 \not\in I_k \right) 
			\\ & \qquad \qquad \quad
			- \mathbb{P} \left( \left| Z_t (P_k) - A^Z (\theta_{t/ \varepsilon} \cdot) \right| 
			> C \, e^{- \mu t} 
			\textrm{ for some } t \geq 0 \right) 
			\\ & \qquad \qquad \quad
			- \mathbb{P} \left( \left| Z_t (P_{k+1}) - A^Z (\theta_{t/ \varepsilon} \cdot) \right| 
			> C \, e^{- \mu t} 
			\textrm{ for some } t \geq 0 \right) \big) \\
		& \qquad \geq \frac{1}{n} \sum_{k=0}^{n-1} 
			\mathbb{P} \left(  A^Z(\cdot) \not\in  I_k, Z_0 \not\in I_k \right) 
			- \alpha /2 \\
		& \qquad \geq \frac{n-2}{n} - \alpha /2 \geq 1- \alpha
	\end{align*}
	for all $\mathcal{F^-}$-measurable $Z_0$. 
\end{proof}

\subsection{Approaching the point attractor}

Combining the estimates from the previous subsections, we are able to show the rate of convergence of 
$X_{t/\varepsilon}^\varepsilon$ to $A^Z$. 
As a direct consequence, we get
that $ A^Z$ and $A^{X,\varepsilon}_{point}$ are close for small $\varepsilon$ and the upper bound 
for the rate of convergence of $X_{t/\varepsilon}^\varepsilon$ to $A^{X,\varepsilon}_{point}$.
Moreover, we show that $X_t^\varepsilon$ does not approach its attractor on a faster time scale.

\begin{proposition}
\label{prop_upper_bound_weak_syn_Z}
	Let $0 <\alpha <\beta <1$, $r>0$ and $0< \mu <0.5$. 
	Then, there exists $C >0$ such that for all $T_1,T_3>0$
	there exists an $\varepsilon_0 >0$ such that 
	\begin{align*}
		\mathbb{P} \left( \left| X_{t/\varepsilon}^\varepsilon - 
		 A^Z(\theta_{t/ \varepsilon} \cdot ) \right|
		\leq C e^{- \mu (t-T_2) } \textrm{ for all } T_2 \leq t \leq T_2+T_3 \right) \geq 1 - \beta
	\end{align*}
	for all $ 0<\varepsilon \leq \varepsilon_0$, $T_2 \geq T_1$ and all 
	$\mathcal{F}^-$-measurable $X_0^\varepsilon$ satisfying 
	\begin{align*}
		\mathbb{P} \left ( R_0^\varepsilon \leq r \right) \geq 1 - \alpha.
	\end{align*}
\end{proposition}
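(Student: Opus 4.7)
The plan is to restart the analysis at each time $T_2 \geq T_1$ and apply the three available ingredients (Lemma \ref{lemma_hitting_of_radius_one}, Lemma \ref{lemma_convergence_to_limit_cycle}, and Proposition \ref{convergence_lc_to_attractor_random_start}) on the interval $[T_2, T_2+T_3]$ of fixed length $T_3$, so that the resulting $\varepsilon_0$ does not deteriorate as $T_2$ grows. I would first fix $C$ once and for all: applying Proposition \ref{convergence_lc_to_attractor_random_start} with a small parameter $\alpha' \leq (\beta-\alpha)/3$ in place of $\alpha$ produces a constant $C_0 > 0$ depending only on $\alpha, \beta, \mu$, and I would set $C := 2 C_0$. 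For the given $T_1, T_3$ I would then set the coupling tolerance to $\delta := C_0 e^{-\mu T_3}/2$, chosen so that $2\delta \leq C_0 e^{-\mu(t-T_2)}$ for every $t \in [T_2, T_2 + T_3]$.

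For any fixed $T_2 \geq T_1$ I would pass to the shifted probability space via $\theta_{T_2/\varepsilon}$; on this shifted space $\phi_{T_2}^\varepsilon$ is $\mathcal{F}^-$-measurable. I would define $Z$ on $[T_2,\infty)$ as the solution of \eqref{SDE_limit_cycle} started at $Z_{T_2} = \phi_{T_2}^\varepsilon$ and driven by the shifted Brownian motion, so the angular matching $|\phi_{T_2}^\varepsilon - Z_{T_2}| = 0$ is trivial. Lemma \ref{lemma_convergence_to_limit_cycle} (applied on $[0, T_3]$ in shifted time, with tolerance $\delta$) then yields $|R_t^\varepsilon - 1| < \delta$ and $|\phi_t^\varepsilon - Z_t| < \delta$ on $[T_2, T_2 + T_3]$ with probability at least $1 - \alpha'$, \emph{provided} the hypothesis $\mathbb{P}(|R_{T_2}^\varepsilon - 1| < \eta) \geq 1 - \alpha'$ holds for the $\eta$ output by that lemma. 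Simultaneously, Proposition \ref{convergence_lc_to_attractor_random_start} applied on the shifted space gives $|Z_t - A^Z(\theta_{t/\varepsilon} \cdot)| \leq C_0 e^{-\mu(t - T_2)}$ for all $t \geq T_2$ with probability at least $1 - \alpha'$. On the intersection of these events the Cartesian triangle inequality yields, for $t \in [T_2, T_2 + T_3]$,
\[ |X_{t/\varepsilon}^\varepsilon - A^Z(\theta_{t/\varepsilon} \cdot)| \leq |R_t^\varepsilon - 1| + |\phi_t^\varepsilon - Z_t| + |Z_t - A^Z(\theta_{t/\varepsilon} \cdot)| \leq 2\delta + C_0 e^{-\mu(t - T_2)} \leq C e^{-\mu(t - T_2)}, \]
and choosing $3\alpha' \leq \beta - \alpha$ delivers the total probability bound $1 - \beta$.

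The main obstacle is verifying the hypothesis $\mathbb{P}(|R_{T_2}^\varepsilon - 1| < \eta) \geq 1 - \alpha'$ \emph{uniformly} in $T_2 \geq T_1$. Lemma \ref{lemma_hitting_of_radius_one} supplies this at $T_2 = T_1$ with an $\varepsilon_0$ depending only on $T_1$, but a naive Markov iteration over intervals of fixed length would cost probability linearly in $T_2 - T_1$, incompatible with a $T_2$-independent $\varepsilon_0$. I would handle this by combining that single application with the escape-time lower bound of Proposition \ref{prop_lower_bound} applied to the annulus $D_{1 - \eta, 1 + \eta}$: for $\varepsilon$ small enough (depending only on $T_1$), once $R^\varepsilon$ enters this annulus the exit time is of order $\varepsilon \exp(V(1 - \eta, 1, 1 + \eta)/\varepsilon)$, and together with the concentration of the invariant distribution of $R^\varepsilon$ near $1$ as $\varepsilon \to 0$ (driven by the $\varepsilon^{-1}$-scale contractive drift of the radial SDE) this yields $\mathbb{P}(|R_{T_2}^\varepsilon - 1| < \eta) \geq 1 - \alpha'$ for every $T_2 \geq T_1$. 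Making this uniform-in-$T_2$ radial control precise is where the main technical work lies; the coupling and exponential-decay steps above are then routine consequences of the earlier lemmas.
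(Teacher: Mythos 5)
Your argument has the same skeleton as the paper's: start $Z$ at the current angle $\phi^\varepsilon$, invoke Proposition \ref{convergence_lc_to_attractor_random_start} for the exponential decay of $|Z_t-A^Z(\theta_{t/\varepsilon}\cdot)|$, bound the coupling errors $|R^\varepsilon_t-1|$ and $|\phi^\varepsilon_t-Z_t|$ by a $T_3$-dependent tolerance via Lemmas \ref{lemma_hitting_of_radius_one} and \ref{lemma_convergence_to_limit_cycle}, absorb them into the constant by choosing the tolerance of size $e^{-\mu T_3}$, and finish with the triangle inequality in Cartesian coordinates. The one place you diverge is the reduction of general $T_2$ to the base case, and it is precisely where your write-up leaves a hole. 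You shift directly to time $T_2$ and start $Z$ at $\phi^\varepsilon_{T_2}$, which forces you to verify the \emph{fine} radial hypothesis $\mathbb{P}(|R^\varepsilon_{T_2}-1|<\eta)\geq 1-\alpha'$ uniformly in $T_2$; the escape-time bound of Proposition \ref{prop_lower_bound} cannot deliver this (for fixed $\varepsilon$ it is exhausted once $T_2$ exceeds $\varepsilon\exp(V/\varepsilon)$), and the appeal to concentration of the invariant distribution is machinery the paper neither develops nor needs. The paper instead restarts the whole argument from the $\mathcal{F}^-$-measurable initial condition $X^\varepsilon_{(T_2-T_1)/\varepsilon}$ at time $(T_2-T_1)/\varepsilon$, so that the window $[T_2-T_1,T_2]$ of length $T_1$ serves as the burn-in for Lemma \ref{lemma_hitting_of_radius_one}; at the restart time one then only needs the \emph{crude} bound $\mathbb{P}(R^\varepsilon_{(T_2-T_1)/\varepsilon}\leq r')\geq 1-\alpha$ for some fixed $r'$, which is preserved uniformly in time by the strongly inward radial drift. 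If you move your restart point back by $T_1$ and re-run the burn-in, your "main technical work" disappears and your proof coincides with the paper's; as written, that uniform fine radial control is asserted rather than proved.
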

	
\begin{proof}
	Let $\varepsilon>0$.
	We start the SDE \eqref{SDE_limit_cycle} in $Z_{T_1}^\varepsilon= \phi_{T_1}^\varepsilon$.
	By Proposition \ref{convergence_lc_to_attractor_random_start} there exists $c>0$ such that
	\begin{align*}
		\mathbb{P} \left( \left| Z_t - A^Z (\theta_{t/ \varepsilon} \cdot) \right| \leq c \, 
		e^{- \mu (t-T_1)} \textrm{ for all } t \geq T_1 \right) 
		\geq 1- \alpha /2.
	\end{align*} 
	for all $\varepsilon>0$.
	Using Lemma \ref{lemma_hitting_of_radius_one} and \ref{lemma_convergence_to_limit_cycle}, 
	there exists $\varepsilon_0>0$ such that
	\begin{align*}
		\mathbb{P} \left( \max_{T_1 \leq t \leq T_1+T_3}\left| R_{t}^\varepsilon -1 \right| 
		< e^{- \mu T_3} 
		\textrm{ and }
		\max_{T_1\leq t \leq T_1+T_3}\left| \phi_{t}^\varepsilon - Z_{t}^\varepsilon \right| 
		< e^{- \mu T_3} \right) 
		\geq 1- \alpha /2
	\end{align*}
	for all $\varepsilon \leq \varepsilon_0$.
	Setting $C:= c +2 $ it follows that
	\begin{align*}
		\mathbb{P} \left( \left| X_{t/\varepsilon}^\varepsilon - 
		 A^Z(\theta_{t/ \varepsilon} \cdot)  \right|
		\leq C e^{- \mu (t-T_1) } \textrm{ for all } T_1 \leq t \leq T_1+T_3 \right) \geq 1 - \alpha.
	\end{align*}
	Using the same arguments for the process starting in 
	$X_{(T_2-T_1)/\varepsilon}^\varepsilon$ at time 
	$(T_2-T_1)/\varepsilon$, 
	the statement follows.
\end{proof}

\begin{remark}
	Observe that the statement of Proposition \ref{prop_upper_bound_weak_syn_Z} 
	is not true if one takes the supremum over all
	$t \geq T$ inside the probability term. Precisely, for all $\delta, \varepsilon, T >0$ 
	\begin{align*}
		\mathbb{P} \left( \, 
		\sup_{t \geq T} \, \left| X_{t/\varepsilon}^\varepsilon - 
		 A^Z(\theta_{t/ \varepsilon} \cdot) \right| \leq \delta \right) =0
	\end{align*}
	since the process $X_t^\varepsilon$ leaves a neighborhood of the unit sphere for some 
	$t\geq T/\varepsilon$ almost surely.
\end{remark}

\begin{corollary}
	\label{cor_attractors_close}
	For all $\alpha,\delta,T >0$ there exists an $\varepsilon_0 >0$ such that 
	\begin{align*}
		\mathbb{P} \left(  \inf_{a \in A^{X,\varepsilon}_{point} (\theta_{t} \cdot)} 
		|A^Z(\theta_{t} \cdot)-a |
		\leq \delta \textrm{ for all } 0 \leq t \leq T/\varepsilon \right) \geq 1 - \alpha
	\end{align*}
	for all $ 0<\varepsilon \leq \varepsilon_0$.
\end{corollary}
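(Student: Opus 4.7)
The plan is to exhibit a measurable selection $\tilde a(\omega) \in A^{X,\varepsilon}_{point}(\omega)$ whose forward orbit shadows $u \mapsto A^Z(\theta_u\omega)$ uniformly for $u \in [0, T/\varepsilon]$. By invariance of the minimal weak point attractor, $\varphi_u(\omega, \tilde a) \in A^{X,\varepsilon}_{point}(\theta_u\omega)$, so a uniform bound $|\varphi_u(\omega, \tilde a) - A^Z(\theta_u\omega)| \leq \delta$ immediately yields the desired infimum estimate.

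I would build $\tilde a$ from a pullback. Fix $L$ large enough that $Ce^{-\mu L}$ and $Ce^{-\mu L/2}$ are both at most $\delta/2$, where $C$ and $\mu \in (0, 1/2)$ come from Proposition \ref{prop_upper_bound_weak_syn_Z}. Set $\omega' := \theta_{-2L/\varepsilon}\omega$ and, for a fixed reference $x_0 \in \mathbb{R}^2$, put $y_0(\omega') := \varphi_{L/\varepsilon}(\theta_{-L/\varepsilon}\omega', x_0)$. Let $a^*(\omega')$ be a measurable selection of the point in the random compact set $A^{X,\varepsilon}_{point}(\omega')$ closest to $y_0(\omega')$, and define $\tilde a(\omega) := \varphi_{2L/\varepsilon}(\omega', a^*(\omega'))$; by invariance $\tilde a(\omega) \in A^{X,\varepsilon}_{point}(\omega)$, and both $y_0$ and $a^*$ are $\mathcal{F}^-$-measurable with respect to $\omega'$. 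The shadowing estimate is then obtained by a single application of Proposition \ref{prop_upper_bound_weak_syn_Z} at base $\omega'$ with initial condition $a^*(\omega')$ and parameters $T_1 = T_2 = L$, $T_3 = L + T$: for $s = 2L/\varepsilon + u$ with $u \in [0, T/\varepsilon]$ the scaled time $\varepsilon s = 2L + \varepsilon u$ lies in $[2L, 2L + T] \subset [L, 2L + T]$, so with high probability
\[
|X^\varepsilon_s(\omega', a^*(\omega')) - A^Z(\theta_s\omega')| \leq Ce^{-\mu(L + \varepsilon u)} \leq Ce^{-\mu L} \leq \delta/2,
\]
and the cocycle property together with $\theta_{2L/\varepsilon}\omega' = \omega$ identifies the left side with $|\varphi_u(\omega, \tilde a) - A^Z(\theta_u\omega)|$.

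The main obstacle is verifying the hypothesis $\mathbb{P}(|a^*(\omega')| \leq r) \geq 1 - \alpha/3$ of Proposition \ref{prop_upper_bound_weak_syn_Z} for some fixed $r$ independent of $\varepsilon$. This is exactly where the choice of $a^*$ as a closest attractor point to the pullback $y_0$ pays off: a preliminary application of Proposition \ref{prop_upper_bound_weak_syn_Z} (at base $\theta_{-L/\varepsilon}\omega'$, with the constant initial condition $x_0$ and parameters $T_1 = T_2 = T_3 = L/2$) yields $|y_0(\omega') - A^Z(\omega')| \leq Ce^{-\mu L/2} \leq \delta/2$ with high probability, whence $|y_0(\omega')| \leq 3/2$; and the pullback attraction property of the minimal weak point attractor (invoked at pullback time $L/\varepsilon \to \infty$ as $\varepsilon \to 0$) gives $|a^*(\omega') - y_0(\omega')| \leq \delta/2$ with probability approaching one, so $|a^*(\omega')| \leq 2$. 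A union bound over the three good events, together with a sufficiently small choice of $\varepsilon_0$, completes the proof.
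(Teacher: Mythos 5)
Your overall strategy is the paper's: pick an $\mathcal{F}^-$-measurable point $a^*$ in the minimal weak point attractor, run Proposition \ref{prop_upper_bound_weak_syn_Z} from it, use invariance to convert the resulting bound into a bound on $\inf_{a\in A^{X,\varepsilon}_{point}(\theta_t\cdot)}|A^Z(\theta_t\cdot)-a|$, and shift back by measure preservation. The forward-shadowing part of your argument (the window arithmetic with $T_1=T_2=L$, $T_3=L+T$, and the identification via the cocycle property) is fine.

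The gap is in how you verify the boundedness hypothesis $\mathbb{P}(|a^*(\omega')|\le r)\ge 1-\alpha/3$. You place $a^*$ near $y_0$ by invoking ``the pullback attraction property of the minimal weak point attractor at pullback time $L/\varepsilon\to\infty$.'' But the definition of a weak point attractor only gives $\inf_{a\in A(\omega)}|\varphi_t(\theta_{-t}\omega,x_0)-a|\to 0$ in probability as $t\to\infty$ \emph{for each fixed} $\varepsilon$, with no uniformity in $\varepsilon$: the time needed for attraction is exactly the quantity this section of the paper is trying to estimate, and the statement that time $L/\varepsilon$ suffices is essentially Theorem \ref{thm_upper_bound_weak_syn_X}, which is proved \emph{using} this corollary. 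As written, your step is therefore either unjustified or circular. The repair is to drop the pullback construction of $y_0$ entirely and select $a^*$ directly (via a Castaing-type measurable selection applied to the $\mathcal{F}^-$-measurable version of $A^{X,\varepsilon}_{point}$ from \cite[Theorem 23]{Crauel2018}) from $A^{X,\varepsilon}_{point}(\omega)\cap B_2$: since the drift pushes every point outside the unit ball inward and the attractor is invariant, $\mathbb{P}(A^{X,\varepsilon}_{point}(\omega)\cap B_2=\emptyset)\to 0$ as $\varepsilon\to 0$, which gives the needed boundedness without any appeal to attraction rates. With that replacement the rest of your argument goes through.
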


\begin{proof}
	By the construction of the minimal weak point attractor in \cite[Theorem 23]{Crauel2018}, the
	minimal weak point attractor of \eqref{original_SDE} 
	has a $\mathcal{F}^-$-measurable version. 
	We denote this version also by $A^{X,\varepsilon}_{point}$.
	Using \cite[Theorem III.9]{Castaing1977}, 
	we can select an $\mathcal{F}^-$-measurable $x^\varepsilon (\omega)$ where
	\begin{align*}
		x^\varepsilon (\omega) \in 
		\begin{cases}
			A^{X,\varepsilon}_{point} (\omega) \cap B_2, & 
				\textrm{if } A^{X,\varepsilon}_{point} (\omega) \cap B_2 \not = \emptyset \\
			\mathbb{R}^2, & \textrm{else} .
		\end{cases}
	\end{align*}	
	Since the drift of \eqref{original_SDE} pushes any point outside the unit ball
	towards the unit ball, it holds that 
	\begin{align*}
		\lim_{\varepsilon \rightarrow 0} \mathbb{P} 
		\left( A^{X,\varepsilon}_{point} (\omega) \cap B_2  = \emptyset \right) = 0.
	\end{align*}
	Applying Proposition \ref{prop_upper_bound_weak_syn_Z}, there exist some $\varepsilon_1, s>0$ such that 
	\begin{align*}
		\mathbb{P} \left( \left| X_{t/\varepsilon}^\varepsilon (x^\varepsilon(\cdot)) - 
		 A^Z(\theta_{t/ \varepsilon} \cdot ) \right|
		\leq \delta \textrm{ for all } s \leq t \leq T \right) \geq 1 - \alpha /2
	\end{align*}
	for all $\varepsilon\leq \varepsilon_1$. 
	Since $x^\varepsilon (\omega) \in A^{X,\varepsilon}_{point} (\omega)$ implies that 
	$X_{t}^\varepsilon (x^\varepsilon(\omega)) \in A^{X, \varepsilon}_{point} (\theta_{t} \omega )$,
	there exists $\varepsilon_2 >0$ such that
		\begin{align*}
		\mathbb{P} \left( \inf_{a \in A^{X,\varepsilon}_{point} (\theta_{t} \cdot)} 
		 \left| A^Z(\theta_{t} \cdot) -a \right| 
		\leq \delta \textrm{ for all } s/ \varepsilon \leq t \leq (s+T)/ \varepsilon \right) \geq 1 - \alpha
	\end{align*}
	for all $\varepsilon\leq \varepsilon_2$. Using $\theta_{s/\varepsilon}$-invariance 
	of $\mathbb{P}$, the statement follows.
\end{proof}

\begin{theorem}
	\label{thm_upper_bound_weak_syn_X}
	Let $0 <\alpha <\beta <1$, $r>0$ and $0< \mu <0.5$
	Then, there exists $C >0$ such that for all $T_1,T_3>0$
	there exists an $\varepsilon_0 >0$ such that 
	\begin{align*}
		\mathbb{P} \left( \inf_{a \in A^{X,\varepsilon}_{point} (\theta_{t/ \varepsilon} \cdot)} 
		 \left| X_{t/\varepsilon}^\varepsilon - a \right|
		\leq C e^{- \mu ( t-T_2) } \textrm{ for all } 
		T_2 \leq t \leq T_2 +T_3   \right) 
		\geq 1 - \beta
	\end{align*}
	for all $ 0<\varepsilon \leq \varepsilon_0$, $T_2 \geq T_1$ and all 
	$\mathcal{F}^-$-measurable $X_0^\varepsilon$ satisfying 
	\begin{align*}
		\mathbb{P} \left ( R_0^\varepsilon \leq r \right) \geq 1 - \alpha.
	\end{align*}
\end{theorem}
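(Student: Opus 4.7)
The plan is to combine Proposition \ref{prop_upper_bound_weak_syn_Z}, which bounds the distance from the rescaled process to the limit-cycle attractor $A^Z$, with Corollary \ref{cor_attractors_close}, which bounds the distance between $A^Z$ and $A^{X,\varepsilon}_{point}$, via the triangle inequality. The constant $C$ will be obtained from Proposition \ref{prop_upper_bound_weak_syn_Z} by adding $1$, and therefore depends only on $\alpha,\beta,r,\mu$ and not on $T_1,T_3$.

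I would first split the probability budget as $\beta = \beta_1 + \alpha_2$ with $\beta_1 := (\alpha+\beta)/2$ and $\alpha_2 := (\beta-\alpha)/2$. Applying Proposition \ref{prop_upper_bound_weak_syn_Z} with parameters $(\alpha,\beta_1,r,\mu)$ yields $C_0 > 0$, independent of $T_1$ and $T_3$, such that for $\varepsilon$ small enough and every $T_2 \geq T_1$,
\begin{align*}
\mathbb{P}\left(\left|X^\varepsilon_{t/\varepsilon} - A^Z(\theta_{t/\varepsilon}\cdot)\right| \leq C_0\, e^{-\mu(t-T_2)} \text{ for all } T_2 \leq t \leq T_2+T_3\right) \geq 1 - \beta_1.
\end{align*}
I then set $C := C_0 + 1$. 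Next I invoke Corollary \ref{cor_attractors_close} with $\alpha_2$, $\delta := e^{-\mu T_3}$ and $T := T_3$; by the $\theta_{T_2/\varepsilon}$-invariance of $\mathbb{P}$, the bound obtained on the natural-time window $[0, T_3/\varepsilon]$ translates to the window $[T_2/\varepsilon,(T_2+T_3)/\varepsilon]$, which in the rescaled time variable $t = \varepsilon \cdot (\cdot)$ is exactly $[T_2,T_2+T_3]$, giving
\begin{align*}
\mathbb{P}\left(\inf_{a \in A^{X,\varepsilon}_{point}(\theta_{t/\varepsilon}\cdot)} \left|A^Z(\theta_{t/\varepsilon}\cdot) - a\right| \leq e^{-\mu T_3} \text{ for all } T_2 \leq t \leq T_2+T_3\right) \geq 1 - \alpha_2.
\end{align*}
Only the length $T_3$ of the window enters Corollary \ref{cor_attractors_close}, so no uniformity in $T_2$ is required.

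On the intersection of the two events, whose probability is at least $1 - \beta_1 - \alpha_2 = 1 - \beta$, the triangle inequality gives, for every $t \in [T_2,T_2+T_3]$,
\begin{align*}
\inf_{a \in A^{X,\varepsilon}_{point}(\theta_{t/\varepsilon}\cdot)} \left|X^\varepsilon_{t/\varepsilon} - a\right| \leq C_0\, e^{-\mu(t-T_2)} + e^{-\mu T_3} \leq (C_0 + 1)\, e^{-\mu(t-T_2)} = C\, e^{-\mu(t-T_2)},
\end{align*}
since $e^{-\mu T_3} \leq e^{-\mu(t-T_2)}$ on the window. The only real technical point is the shift-invariance step used to move Corollary \ref{cor_attractors_close} onto the interval $[T_2,T_2+T_3]$; beyond that the argument is a direct triangle-inequality combination of the two earlier results.
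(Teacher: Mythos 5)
Your proposal is correct and follows exactly the route the paper takes: its proof of this theorem is the one-line instruction to combine Proposition \ref{prop_upper_bound_weak_syn_Z} and Corollary \ref{cor_attractors_close} via the triangle inequality, and you have simply filled in the details (probability budget split, choice $\delta = e^{-\mu T_3}$, shift-invariance to move the window to $[T_2, T_2+T_3]$, and $C = C_0+1$) in a valid way. In particular your observation that $C$ inherits independence of $T_1, T_3$ from the proposition is the right justification for the order of quantifiers in the statement.
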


\begin{proof}
	Apply Proposition \ref{prop_upper_bound_weak_syn_Z} and Corollary \ref{cor_attractors_close}
	and use the triangle inequality.
\end{proof}

\begin{theorem}
	\label{thm_lower_bound_weak_syn}
	For any $\alpha>0$ there exist $\varepsilon_0, \delta, T>0$ such that
		\begin{align*}
			\mathbb{P} \left( \sup_{a \in A^{X,\varepsilon}_{point} (\theta_{t} \cdot)}  
			\left| X_t^\varepsilon  - a \right|
			> \delta \textrm{ for all } 0 \leq t \leq T/ \varepsilon \right) \geq 1 - \alpha
		\end{align*}
		for all $\varepsilon \leq \varepsilon_0$ and all deterministic $X_0^\varepsilon \in \mathbb{R}^2$.
\end{theorem}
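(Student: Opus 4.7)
The plan is to exploit the time-change representation from Section \ref{section_weak_syn}. In the time-changed scale, the angular component of $X_t^\varepsilon$ behaves, once the radius is near one, like a Brownian motion on the unit circle, whose quadratic variation grows only linearly in the time-changed parameter. Consequently, on a time window $t \in [0, T/\varepsilon]$ with $T$ small in the time-changed scale $s = \varepsilon t \in [0,T]$, the angle cannot travel more than $O(\sqrt{T})$. On the other hand, $A^Z$ is, by rotational symmetry of \eqref{SDE_limit_cycle}, uniformly distributed on the unit circle and $\mathcal{F}^-$-measurable, hence independent of the trajectory of $X^\varepsilon$ launched from a deterministic $X_0^\varepsilon$. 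Uniformity provides a macroscopic angular separation between the attractor and $X_t^\varepsilon$, which the slow angular motion then preserves over the whole window $[0,T/\varepsilon]$.

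Concretely, I would first apply Corollary \ref{cor_attractors_close} to obtain, with probability $\geq 1-\alpha/3$, a selection $a_t^* \in A^{X,\varepsilon}_{point}(\theta_t\cdot)$ with $|a_t^* - A^Z(\theta_t\cdot)| \leq \delta$ for all $t \leq T/\varepsilon$; the triangle inequality then reduces the theorem to the uniform lower bound
\begin{align*}
	|X_t^\varepsilon - A^Z(\theta_t\cdot)| > 2\delta \qquad \text{for all } 0\leq t \leq T/\varepsilon.
\end{align*}
Passing to time-changed coordinates, I would use Lemma \ref{lemma_hitting_of_radius_one} together with the strong radial drift $-\tfrac{4}{\varepsilon} R_s^2 u'(R_s^2)$ of \eqref{SDE_radius_square} to restrict to the event that $R_s^\varepsilon \in [\tfrac12,2]$ for all $s \in [s_0, T]$, where $s_0 \to 0$ as $\varepsilon \to 0$. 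Doob's inequality applied to the angular martingale in \eqref{SDE_angle}, whose quadratic variation is bounded by $4T$ on this event, yields $\sup_{s_0 \leq s \leq T} |\phi_s^\varepsilon - \phi_{s_0}^\varepsilon| \leq \eta$ with probability at least $1-\alpha/6$ for $T$ small. Since $A^Z(\theta_{s/\varepsilon}\cdot) = Z_s(\cdot, A^Z(\cdot))$ and $Z$ is itself a Brownian motion on the circle, the same argument gives $\sup_{s\leq T}|A^Z(\theta_{s/\varepsilon}\cdot) - A^Z(\cdot)| \leq \eta$ with probability at least $1-\alpha/6$.

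It remains to secure an initial angular separation $|A^Z(\cdot) - \phi_{s_0}^\varepsilon| > 5\eta$ with probability at least $1-\alpha/3$. Since $A^Z(\cdot)$ is uniformly distributed on $[0,2\pi)$ and $\phi_{s_0}^\varepsilon$ is $\mathcal{F}^+$-measurable (determined by $W|_{[0, s_0/\varepsilon]}$ and $X_0^\varepsilon$), the two are independent, so the probability that $|A^Z(\cdot) - \phi_{s_0}^\varepsilon| \leq 5\eta$ is at most $10\eta/(2\pi)$, which is made small by the choice of $\eta$. Combining these estimates via the triangle inequality yields $|\phi_t^\varepsilon - A^Z(\theta_{t/\varepsilon}\cdot)| > 3\eta$ throughout $[s_0/\varepsilon,T/\varepsilon]$, and translating back to spatial distance using $R_t^\varepsilon \approx 1$ gives the required $2\delta$-separation. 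The main obstacle is the degenerate case of $|X_0^\varepsilon|$ small (or zero), where the angular coordinate is ill-defined and may oscillate rapidly while the radius is tiny. This is handled by noting that the strong radial drift pushes $R$ out of a neighborhood of $0$ within a time-changed interval $[0,s_0]$ of length $s_0 = o(1)$, so that $\phi_{s_0}^\varepsilon$ depends only on a short initial piece of the Brownian motion; in particular it remains $\mathcal{F}^+$-measurable and independent of the $\mathcal{F}^-$-measurable uniform random variable $A^Z(\cdot)$, so the preceding argument goes through with $\phi_{s_0}^\varepsilon$ in place of $\phi_0^\varepsilon$ for every deterministic $X_0^\varepsilon \in \mathbb{R}^2$.
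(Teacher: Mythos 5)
Your proposal is correct and follows essentially the same route as the paper: an initial macroscopic angular separation from $A^Z$ obtained from the independence of the $\mathcal{F}^-$-measurable, uniformly distributed $A^Z$ and the $\mathcal{F}^+$-measurable angle of $X^\varepsilon$ started at a deterministic point, preserved over the short accelerated window $[0,T]$ because both angles are circle martingales with quadratic variation of order $T$, and then transferred to $A^{X,\varepsilon}_{point}$ by Corollary \ref{cor_attractors_close} and the triangle inequality. The one place the paper is tidier is the initial stretch: it works with the stopping time $\sigma^{\delta,\varepsilon}=\inf\{t\geq 0: |R_t^\varepsilon-1|\leq 2\delta\}$, so that before this time the separation from $A^Z\subset S_1$ is automatic in the radial direction, whereas your event that $R_s^\varepsilon\in[1/2,2]$ for $s\in[s_0,T]$ with a deterministic $s_0$ leaves the interval $[0,s_0)$ uncontrolled for starting points whose radius is already within $2\delta$ of $1$; replacing $s_0$ by $\sigma^{\delta,\varepsilon}$ (which is still $\mathcal{F}^+$-measurable, so your independence argument survives) closes this.
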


\begin{proof}
	Let $ \gamma,T>0$ such that $20 \gamma \leq \alpha \Pi$ and $10T \leq \alpha \gamma^2$.
	By Lemma \ref{lemma_convergence_to_limit_cycle} there exists $ 0<2\delta < \sin \gamma$ and 
	$\varepsilon_0>0$ such that for all $0<\varepsilon \leq \varepsilon_0$
	\begin{align*}
		\mathbb{P} \left( \left| \phi_t^\varepsilon - Z^\varepsilon \right| \leq \gamma \textrm{ for all }
		\sigma^{\delta,\varepsilon} \leq t \leq T \right) \geq 1 - \alpha /5
	\end{align*}	 
	where $\sigma^{\delta,\varepsilon} := \inf \left\{ t \geq 0: \left| R_t^\varepsilon - 1 \right| 
	\leq 2 \delta \right\}$ and $Z^\varepsilon_t$ is the solution to \eqref{SDE_limit_cycle} started in 
	$Z^\varepsilon_{\sigma^{\delta,\varepsilon}} = \phi_{\sigma^{\delta,\varepsilon}}^\varepsilon$. Then,
	\begin{align*}
		&\mathbb{P} \left( \left| X_{t/ \varepsilon} - 
			A^Z(\theta_{t/ \varepsilon} \cdot) \right| 
			> 2 \delta \textrm{ for all } t \leq T \right) \\
		& \qquad \geq \mathbb{P} \left( \left| X_{t/ \varepsilon} 
			-  A^Z(\theta_{t/ \varepsilon} \cdot)\right| 
			> \sin \gamma \textrm{ for all } \sigma^{\delta,\varepsilon} \leq t \leq T \right) \\
		& \qquad \geq \mathbb{P} \left( \left| \phi_t^\varepsilon - 
			A^Z (\theta_{t/ \varepsilon} \cdot) \right| 
			> \gamma \textrm{ for all } \sigma^{\delta,\varepsilon} \leq t \leq T \right) \\
		& \qquad \geq \mathbb{P} \left( \left| \phi_t^\varepsilon -
				Z^\varepsilon_t \right| \leq \gamma \textrm{ and }
			\left| Z^\varepsilon_t- A^Z (\theta_{t/ \varepsilon} \cdot) \right| > 2\gamma 
			\textrm{ for all } \sigma^{\delta,\varepsilon} \leq t \leq T \right) \\
		& \qquad \geq \mathbb{P} \left( \left| Z^\varepsilon_t- A^Z (\theta_{t/ \varepsilon} \cdot) \right| 
			> 2\gamma 
			\textrm{ for all } \sigma^{\delta,\varepsilon} \leq t \leq T \right) -  \alpha /5 .
	\end{align*}	 
	Independence of $Z^\varepsilon_{\sigma^{\delta,\varepsilon}}$ and $A^Z(\cdot)$ implies 
	\begin{align*}
		\mathbb{P} \left( \left| Z^\varepsilon_{\sigma^{\delta,\varepsilon}} - A^Z (\cdot) \right| 
		\leq 4 \gamma \right)
		= \frac{8 \gamma}{2\Pi} \leq  \alpha /5.
	\end{align*}
	Since $T$ was chosen small, it holds that
	\begin{align*}
		&\mathbb{P} \left( \left| Z^\varepsilon_t- A^Z (\theta_{t/ \varepsilon} \cdot) \right| > 2\gamma 
			\textrm{ for all } \sigma^{\delta,\varepsilon} \leq t \leq T \right)\\
		& \qquad \geq \mathbb{P} \left( \left| Z^\varepsilon_{\sigma^{\delta,\varepsilon}} - A^Z (\cdot) \right|
			 > 4 \gamma,
			\left| Z^\varepsilon_t - Z^\varepsilon_{\sigma^{\delta,\varepsilon}} \right| \leq \gamma 
			\textrm{ and }
			 \left| A^Z(\cdot)- A^Z (\theta_{t/ \varepsilon} \cdot) \right| \leq \gamma 
			\textrm{ for all } \sigma^{\delta,\varepsilon} \leq t \leq T \right) \\
		& \qquad  \geq 1 - \alpha /5 
			-\mathbb{P} \left( \max_{\sigma^{\delta,\varepsilon} \leq t \leq T} 
			\left| Z^\varepsilon_t - Z^\varepsilon_{\sigma^{\delta,\varepsilon}} \right| > \gamma  \right)
			- \mathbb{P} \left(  \max_{\sigma^{\delta,\varepsilon} \leq t \leq T} 
			\left| A^Z(\cdot)- A^Z (\theta_{t/ \varepsilon} \cdot) \right| > \gamma  \right) \\
		& \qquad \geq 1 - 3 \alpha /5
	\end{align*}
	Therefore, 
	\begin{align*}
		\mathbb{P} \left( \left| X^\varepsilon_{t/ \varepsilon} - 
			A^Z(\theta_{t/ \varepsilon} \cdot) \right| 
			> 2 \delta \textrm{ for all } t \leq T \right) 
		\geq 1 - 4 \alpha /5.
	\end{align*}	 
	Applying Corollary \ref{cor_attractors_close}, the statement follows.
\end{proof}

For small $\delta>0$ denote by 
\begin{align*}
	\underline{\tau _{0,\delta,x}^\varepsilon} := \inf \left\{ t \geq 0 : 
	\inf_{a \in A^{X,\varepsilon}_{point} (\theta_{t} \cdot)} 
	\left| X_t^\varepsilon (x)  - a \right| \leq \delta \right\}
\end{align*}
and 
\begin{align*}
	\overline{\tau _{0,\delta,x}^\varepsilon} := \inf \left\{ t \geq 0 : 
	\sup_{a \in A^{X,\varepsilon}_{point} (\theta_{t} \cdot)} 
	\left| X_t^\varepsilon (x)  - a \right| \leq \delta \right\}
\end{align*}
the time the process $X_t^\varepsilon$ started in $x \in \mathbb{R}^2$ requires to approach
some point respectively all points of the minimal weak point attractor $A^{X,\varepsilon}_{point}$.
Observe that $ \underline{\tau _{0,\delta,x}^\varepsilon} \leq \overline{\tau _{0,\delta,x}^\varepsilon}$.
If the RDS associated to \eqref{original_SDE} synchronize both quantities coincide.

\begin{corollary}
	\label{cor_time_weak_syn}
	For any $\alpha>0$ there exists some $\delta_0>0$ such that for all $0<\delta \leq \delta_0$ 
	there exist $\varepsilon_0,T_1,T_2>0$ such that
	\begin{align*}
		\mathbb{P} \left( 
		\underline{\tau _{0,\delta,x}^\varepsilon} < T_2 / \varepsilon
		\textrm{ and }
		\overline{\tau _{0,\delta,x}^\varepsilon} > T_1 / \varepsilon \right)
		\geq 1 - \alpha
	\end{align*}	 
	for all $0<\varepsilon \leq \varepsilon_0$ and $ x \in \mathbb{R}^2$.
	In particular, if the RDS associated to \eqref{original_SDE} synchronize weakly, then
	\begin{align*}
		\mathbb{P} \left( 
		T_1 / \varepsilon < \underline{\tau _{0,\delta,x}^\varepsilon} 
		= \overline{\tau _{0,\delta,x}^\varepsilon}  < T_2 / \varepsilon \right)
		\geq 1 - \alpha
	\end{align*}
\end{corollary}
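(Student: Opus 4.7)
The plan is to assemble the corollary from the two complementary quantitative results of this section: Theorem \ref{thm_upper_bound_weak_syn_X} bounds from above the time at which some point of $A^{X,\varepsilon}_{point}$ lies within $\delta$ of $X_t^\varepsilon$, while Theorem \ref{thm_lower_bound_weak_syn} bounds from below the time at which every point of the attractor is within $\delta$ of $X_t^\varepsilon$. These two stopping times are exactly $\underline{\tau _{0,\delta,x}^\varepsilon}$ and $\overline{\tau _{0,\delta,x}^\varepsilon}$, so the proof is essentially a repackaging, supplemented by a monotonicity-in-$\delta$ observation and a union bound.

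First I would apply Theorem \ref{thm_lower_bound_weak_syn} with $\alpha/2$ in place of $\alpha$ to obtain $\delta_0, T_1, \varepsilon_1 > 0$ such that $\sup_{a \in A^{X,\varepsilon}_{point}(\theta_t \cdot)} |X_t^\varepsilon - a| > \delta_0$ holds for all $0 \leq t \leq T_1/\varepsilon$ with probability at least $1 - \alpha/2$, uniformly in deterministic $x \in \mathbb{R}^2$ and $\varepsilon \leq \varepsilon_1$. Since the event $\{\overline{\tau _{0,\delta,x}^\varepsilon} \leq s\}$ shrinks as $\delta$ decreases, this automatically upgrades to $\mathbb{P}(\overline{\tau _{0,\delta,x}^\varepsilon} > T_1/\varepsilon) \geq 1 - \alpha/2$ for every $\delta \leq \delta_0$, with the same $T_1$ and $\varepsilon_1$.

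For the upper bound I fix $\delta \leq \delta_0$ and invoke Theorem \ref{thm_upper_bound_weak_syn_X} with tolerance $\alpha/2$, exponent $\mu = 1/4$, radius $r > |x|$, and the free parameters $T_1', T_3'$ chosen so that $T_3' \geq \mu^{-1}\log(C/\delta)$, where $C$ is the constant supplied by the theorem. Taking $T_2' = T_1'$ in the theorem and evaluating its almost-sure bound at the accelerated time $t^* := T_1' + \mu^{-1}\log(C/\delta) \leq T_1' + T_3'$ yields that with probability at least $1 - \alpha/2$ some point of $A^{X,\varepsilon}_{point}(\theta_{t^*/\varepsilon} \cdot)$ sits within $\delta$ of $X_{t^*/\varepsilon}^\varepsilon$, whence $\underline{\tau _{0,\delta,x}^\varepsilon} \leq t^*/\varepsilon =: T_2/\varepsilon$. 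Setting $\varepsilon_0 := \min(\varepsilon_1, \varepsilon_2)$, with $\varepsilon_2$ the threshold from this second application, and applying a union bound over the two events gives
\[
\mathbb{P}\bigl(T_1/\varepsilon < \overline{\tau _{0,\delta,x}^\varepsilon} \text{ and } \underline{\tau _{0,\delta,x}^\varepsilon} < T_2/\varepsilon\bigr) \geq 1 - \alpha.
\]

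For the ``in particular'' part, weak synchronization means that $A^{X,\varepsilon}_{point}(\omega)$ is $\mathbb{P}$-almost surely a singleton; then $\inf_a|y - a| = \sup_a|y - a|$ for every $y \in \mathbb{R}^2$, so $\underline{\tau _{0,\delta,x}^\varepsilon} = \overline{\tau _{0,\delta,x}^\varepsilon}$ and the two-sided sandwich $T_1/\varepsilon < \underline{\tau _{0,\delta,x}^\varepsilon} = \overline{\tau _{0,\delta,x}^\varepsilon} < T_2/\varepsilon$ is immediate. The only subtle point I foresee is the dependence of the constant $C$ on the radius $r = |x|$ in Theorem \ref{thm_upper_bound_weak_syn_X}, which in turn makes $T_2$ and $\varepsilon_0$ depend on $|x|$; reading the statement genuinely uniformly in $x \in \mathbb{R}^2$ would require that the deterministic flow of \eqref{ODE} enters a fixed bounded set within a time not depending on $|x|$, which is automatic under the strong contraction assumption of Section \ref{section_strong_syn} and otherwise forces $\varepsilon_0$ and $T_2$ to depend on the starting point.
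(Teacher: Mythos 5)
Your proof is correct and follows exactly the route of the paper, whose entire proof is the single sentence that the lower bound follows from Theorem \ref{thm_lower_bound_weak_syn} and the upper bound from Theorem \ref{thm_upper_bound_weak_syn_X}; your monotonicity-in-$\delta$ observation and the explicit choice $T_2 = T_1' + \mu^{-1}\log(C/\delta)$ are precisely the details the paper leaves implicit. The uniformity-in-$x$ caveat you raise (the constant and threshold in Theorem \ref{thm_upper_bound_weak_syn_X} depend on the radius $r$ bounding $|x|$) is a genuine subtlety that the paper's one-line proof does not address either, so flagging it is a point in your favour rather than a defect of your argument.
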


\begin{proof}
	The lower bound follows by theorem \ref{thm_lower_bound_weak_syn} 
	and the upper bound by theorem \ref{thm_upper_bound_weak_syn_X}.
\end{proof}

\begin{remark}
	In contrast to Corollary \ref{cor_time_weak_syn}, if $u$ has more than one local minima the 
	time until a point approach the attractor under the dynamics of \eqref{original_SDE} can 
	increase exponentially in $\varepsilon^{-1}$. 
	For this purpose, observe that one can find a lower bound for the time until
	the paths of the solution started in different minima approach each other using the 
	difference of the potential $U$ in the minima 
	and similar arguments as in section \ref{section_lower_bound}. \\
	Hence, in the case of $u$ having multiple minima, the difference between the time 
	a point and a set requires to approach the attractor is not as significant as in the case
	where $u$ has exactly one minimum. \\
\end{remark}

\section*{Acknowledgement}

The author would like to thank Michael Scheutzow and Anthony Quas for drawing her attention to this problem.

\bibliographystyle{plain}
\bibliography{mybib}

\end{document}